\newtheorem{theorem}{Theorem}[section]
\begin{document}

\title[Synchronization of Chaos in Ecology
~~~~~ ]{ Using Prey Abundance to Synchronize Two Chaotic GLV Models   \footnote { \\ \it Journal of Dynamical Systems $\&$ Geometric Theories \\
\\
\copyright Taru Publications}}
\author[Shubhangi ]{ {\bf {Shubhangi Dwivedi}}{
\\e-mail: shubhangi.dwivedi176@gmail.com }\\}
\author[ Nitu Kumari]{ {\bf {Nitu Kumari}}{
\\e-mail:  nitu@iitmandi.ac.in }
\\{School of Mathematical and Statistical Sciences, Kamand, Indian Institute of Technology, Mandi-Himachal Pradesh, 175005, India}
\\}
\author[R.K.Upadhyay]{ {\bf { Ranjit Kumar Upadhyay}}{
\\{Department of Applied Mathematics, Indian Institute of Technology (ISM), Dhanbad-Jharkhand, 826004 , India}
\\e-mail: ranjit.chaos@gmail.com }}

\maketitle
\begin{center}
\tiny ( Accepted: 02 November 2021 )
\end{center}

\begin{abstract}
\tiny
The concept of superfluous prey, or an excess of prey in certain areas within a patchy ecosystem, has significant implications for the synchronization of the predator population. These areas, known as "hotspots," have a higher density of prey compared to other areas and attract a higher concentration of predators. As a result, the predator population becomes more stable and predictable, as they are less likely to migrate to other areas in search of food. This phenomenon can have important consequences for both the predators and their prey, as well as the overall functioning of the ecosystem. This work investigates the synchronization between two chaotic food webs using the generalized Lotka-Volterra (GLV) model consisting of one prey and two predator populations. We, first, examine the impact of three functional responses (linear, Holling type II, and Holling type III) on system dynamics For the study, we consider the model with a linear functional response consisting of chaotic oscillations and apply controllers to stabilize its unstable fixed points. This research contributes to the understanding of how to apply chaotic ecological models to predict the population of competing species in one habitat using information about similar populations in another system. To do this, we configure a drive-response system where prey acts as the driving variable and both predators depend only on the prey. We use active and adaptive control methods to synchronize two coupled GLV models and verify the analytical results through numerical simulations.
\end{abstract}

\noindent {\bf AMS Classification:} {--- }
\\
{\bf Keywords:} { Lyapunov exponents, slow manifold equation, chaos control, complete replacement synchronization, active and adaptive control techniques} 
\vspace{1cm}
\maketitle

\section{Introduction}\label{sec1}
In ancient philosophy and mythology, the word chaos meant the disordered state of unformed matter supposed to have existed before the ordered universe. In the course of its journey to truth, science has met with startling phenomena called chaos. Since the 1960s, with the discovery of chaotic systems, chaos has set a nonlinear dynamics research boom. Chaos theory is attributed to the work of Edward Lorenz. His 1963 paper, ``Deterministic Non-periodic Flow" \cite{lorenz1963deterministic}, is credited for laying the foundation for chaos theory. Hunt and Ott \cite{hunt2015defining} reviewed the problem and proposed a computationally feasible entropy-based  good definition of chaos. They define chaos as `` the existence of positive Expansion Entropy (EE) (equal to topological entropy for infinitely differentiable maps) on a given restraining region (bounded positive volume subset),'' which confirms both the notions (COS as well as OS). Since EE enjoys the properties of simplicity, computability, and generality, so they call it a `` good '' definition of chaos. Chaotic systems are sensitive to initial conditions, topologically mixing and with dense periodic orbits \cite{solari1996nonlinear}, \cite{strogatz2018nonlinear}. Because of slightest difference, chaotic dynamical systems can lead to entirely different trajectories.  The main characteristic of chaos is that the system does not repeat its past behaviour. Mathematically, chaotic dynamical systems are classified as non-linear dynamical systems having at least one positive Lyapunov exponent\cite{janaki2003lyapunov}.  
\\
Chaos theory is not just about chaos - it has two sides to it: chaos control and chaos synchronization. The study of chaos control and understanding chaotic model behaviour has gained significant interest, with applications in various fields such as secure communications, biology, neural networks, finance, and more. Whereas chaos synchronization refers to the alignment in time of different chaotic processes. At first glance, chaotic systems may seem to defy synchronization, but it has been observed and studied in various contexts.\\
In $1665$, Dutch physicist Huygens observed the adjustment of rhythms via a coupling. He noticed that pendulum clocks suspended from the same beam would slowly adjust their phases. In $1984$, Kuramoto set theory for the onset of sync, and  Pecora and Carroll \cite{pecora1990synchronization} reviewed the area of synchronization in chaotic systems and presented a more geometric view using synchronization manifold. In $1999$, Blasius explained the theoretical analysis of seasonally synchronized chaotic population cycles  \cite{blasius1999complex}. All these contributions help the researcher think that synchronization is an essential phenomenon in physical and biological systems. In literature, this phenomenon has been nominated  by various types, such as phase locking, frequency pulling, generalized synchrony, and complete locking, depending on the degree or type of synchronization. \\
Synchronization of chaotic systems can be achieved by configuring drive and response systems, with the goal of using the output of the drive system to control the response system so that the output tracks the drive system asymptotically. However, creating identical chaotic synchronized ecological systems is questionable as it has a potential threat to biodiversity. As discovered in Pecora and Carroll's pioneering work, another way of achieving complete synchronization between two systems is what is now called the technique of complete replacement. The complete replacement synchronization is helpful in a network of patchy ecosystems as it can help in identifying the underlying mechanism that derive the group co-ordination in the present of severely chaotic oscillations. In population biology, the chaotic dynamics  may synchronize if populations are coupled through environmental or biological interactions.\\
From the ecological aspect, it is crucial to figure out the ecologically feasible coupling scheme that guarantee the permanence and global attractiveness of all species in a multi-patch ecosystem. Upadhyay and Rai \cite{upadhyay2009complex} have demonstrated that the two non-identical chaotic ecological systems having different kinds of top-predators can be synchronized using an algorithm proposed by Lu and Cao\cite{lu2005adaptive}. The idea of this approach is that it takes care of the uncertainties involved in the parameter estimation. There are many methods in control theory for synchronizing chaotic systems, including the Adaptive Control Method, Back-stepping Method, Active Control Method, Time-Delay feedback approach, and others.\\
Among above-listed methods, the linear active control technique for chaos synchronization is popular and effective for synchronizing identical and non-identical chaotic systems. In this work, we will use the active and adaptive control methods for achieving synchronization of chaos within either identical or non-identical systems. The Active control method was first used for chaos synchronization by E.W. Bai and K.E. Lonngren \cite{bai1997synchronization}, \cite{njah2009synchronization}. In this method, non-linear controllers are designed based on the Lyapunov stability theory to achieve synchronization in coupled systems using the known parameters of the drive and response systems.
\\
Since we will be dealing with an ecological model, the system's parameters cannot be precisely known. The adaptive control is one of the popular technique for controlling and synchronizing non-linear systems with uncertain parameters \cite{chen2002synchronization}. The method allows the model to adapt data assimilation along the way that may be useful for predicting the real system's future behaviour \cite{dai2001chaos}, \cite{park2005adaptive}. In this method, control law and parameter update law are designed in such a way that the chaotic response system to behave like chaotic drive systems. This scheme maintains the consistent performance of a system in the presence of uncertainty, variations in parameters. Consequently, asymptotically global synchronization control of the chaotic system guarantees to converge the error dynamics to the equilibrium point.
\\
In this article, we investigate the three-dimensional chaotic generalized Lotka-Volterra system, a more general model than the competitive predator-prey examples of Lotka-Volterra types. We examine the properties of the model, including equilibrium analysis, dissipative properties, the maximum Lyapunov exponent, and slow manifold analysis. We use linear feedback control to stabilize the model at its equilibrium points. Our goal is to synchronize two identical GLV models with the same drive variable and different initial conditions using complete replacement synchronization in an ecological context. Prey population is assumed to be in abundance in such a way that predators from nearby patches also feed upon it. To maintain synchronization indefinitely with only small adjustments within a two-patch system, we design the active control law(when system parameters are known) and adaptive control law (when system parameters are unknown) mathematically and validate the analytical results through numerical simulation.

\section{\label{sec2} The Model}

The generalized Lotka-Volterra equations are autonomous and deterministic. The dynamics of the model in a more generalized form are defined as

$$\dot{x}_{i}(t) = {x}_{i}(t)(r_{i} + {f}_{i}({x}_{1},{x}_{2},\dots,{x}_{n})),$$
with initial condition 
$$ x_{i}(0) = {x}_{(i,0)}, ~ for~ {i} \in \{ 1, 2,\dots, n\}.$$
where ${n}$ represents the number of species, ${x}_{i}(t)$ is the size of population ${i}$,  $\dot{x}_{i}(t)$ is the time derivative of species ${i}$, ${t}$ is the time variable, ${x}_{(i,0)}$ is the initial population of species ${i}$, ${r}_{i}$ is the self-growth of species ${i}$, and ${f}_{i}(x_{1},x_{2},x_{3}, \dots,{x}_{n} )$ is the nonlinear multi-variable function with intra and inter-species competition terms for each ${i}$. Although the populations are usually measured in integer numbers, ${x}_{i}(t)$ is real for each ${i}$ and can be interpreted as density, biomass or some other measure which correlates with the number of species. Let ${R}^{n}$ denotes the Euclidean space and the function ${f}_{i}$ is a  continuous, smooth and real-valued function for each ${i}$. We make the following assumptions on ${f}_{i}$ for ${i} \in \{ 1, 2,\dots, n \}$ \cite{rao1999global}.\\
\begin{enumerate}[(i)] 
    \item  ${f}_{i}$, for each ${i}\in \{ 1, 2,\dots, n \}$ is bounded on a domain ${D} \subset {R}^{n}$.
    \item There exist constant ${K}_{i}>0$ for each ${i}\in \{ 1, 2,\dots,n \}$ such that 
 $$||{f}_{i}(X)-{f}_{i}(Y)|| \leq {K}_{i}~||X-Y|| \quad \forall \quad {X},{Y} \in {D} \subset {R}^{n}.$$
 \end{enumerate}

 \subsection{\label{sec2.1}Model with linear functional response}
The generalized Lotka Volterra (GLV) model and its variant have been studied by many authors \cite{ackleh1900extinction},\cite{malcai2002theoretical},\cite{elsadany2018dynamical}. Our main focus is on three-dimensional GLV chaotic system, which has been devised by Samardzija and Greller in $1988$ \cite{samardzija1988explosive}. We assume that the vector field for the model holds the above mentioned properties and takes the following form 
\begin{equation} 
\begin{split}
{\dot{{x}_{1}}}  & = {x}_{1}(1-{x}_{2}+{r}{x}_{1}-{p}{x}_{3}{x}_{1}),\\
   {\dot{{x}_{2}}}  & =  {x}_{2}(-1 +{x}_{1}), \\ 
   {\dot{{x}_{3}}}   & ={x}_{3}(-{q}+{p}{x}_{1}^{2}). 
\end{split}
\end{equation}

where ${x}_{1},~{x}_{2},{x}_{3}$ are the state variables representing prey, middle predator and top predator populations respectively. Here ${p}, {q}, {r}$ are positive parameters. Authors \cite{elsadany2018dynamical} found the system chaotic in particular parametric range $ p= 2.9851, q= 3, r=2$ and shown interesting complex dynamical behaviour. For this set of parameter values, the orbit of all three states for two different initial conditions ($(1.0023,1.0589,0.6503)$ and $(1.0023+10^{-3}, 1.0589+10^{-3}, 0.6503+10^{-3})$) has sensitive dependence on initial conditions (SDIC) which is displayed in figure \ref{fig1}. Figure \ref{fig1} characterizes the SDIC in the system where the trajectories with initial condition $(1.0023+10^{-3}, 1.0589+10^{-3}, 0.6503+10^{-3})$) dominate over trajectories with initial condition ($(1.0023,1.0589,0.6503)$ in long run.
\begin{figure}[ht]
\centering
\begin{tabular}{c}
 \subfigure[]{\includegraphics[width=10cm,height=4cm]{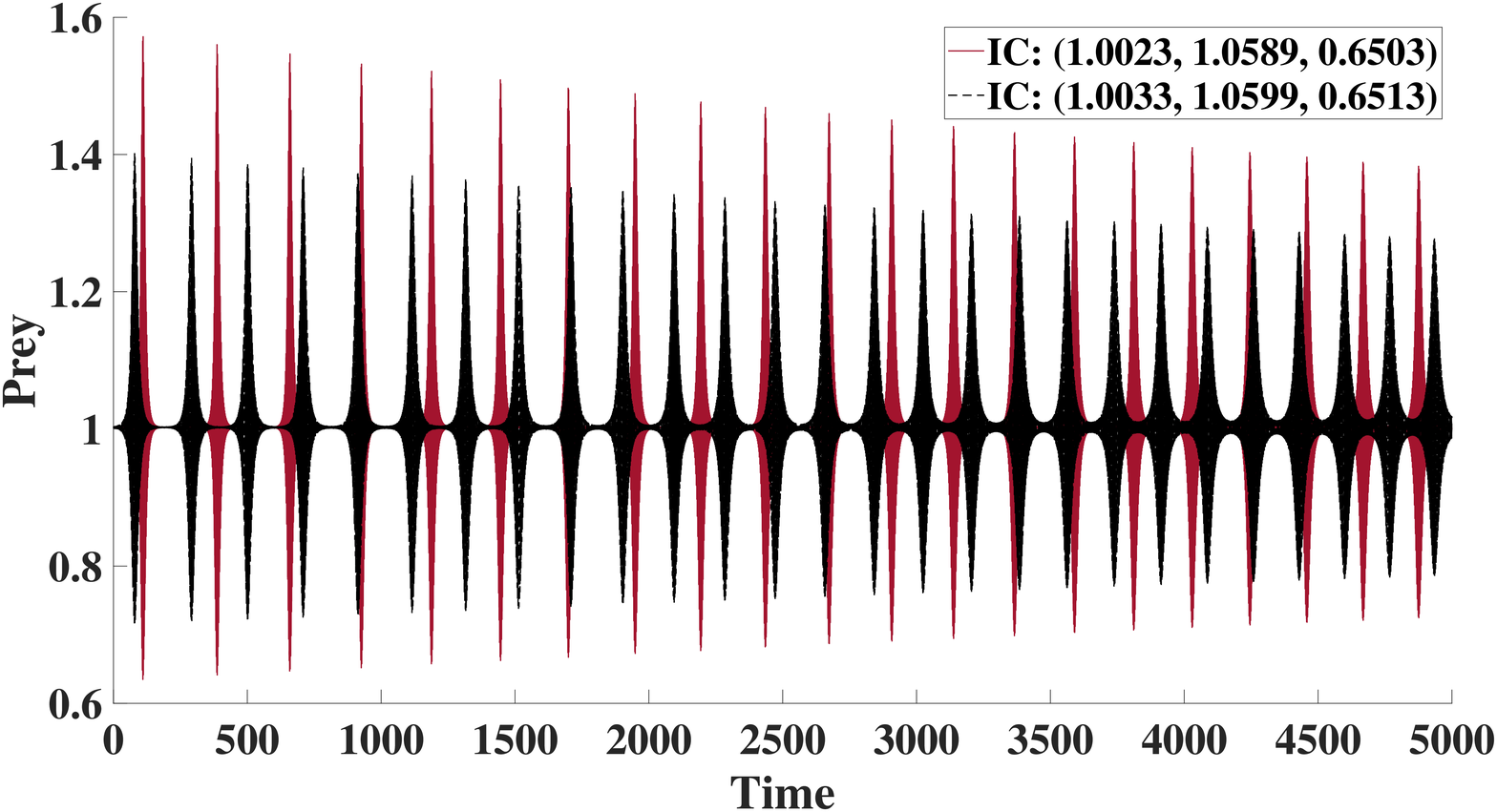}}\\
  \subfigure[]{\includegraphics[width=10cm,height=4cm]{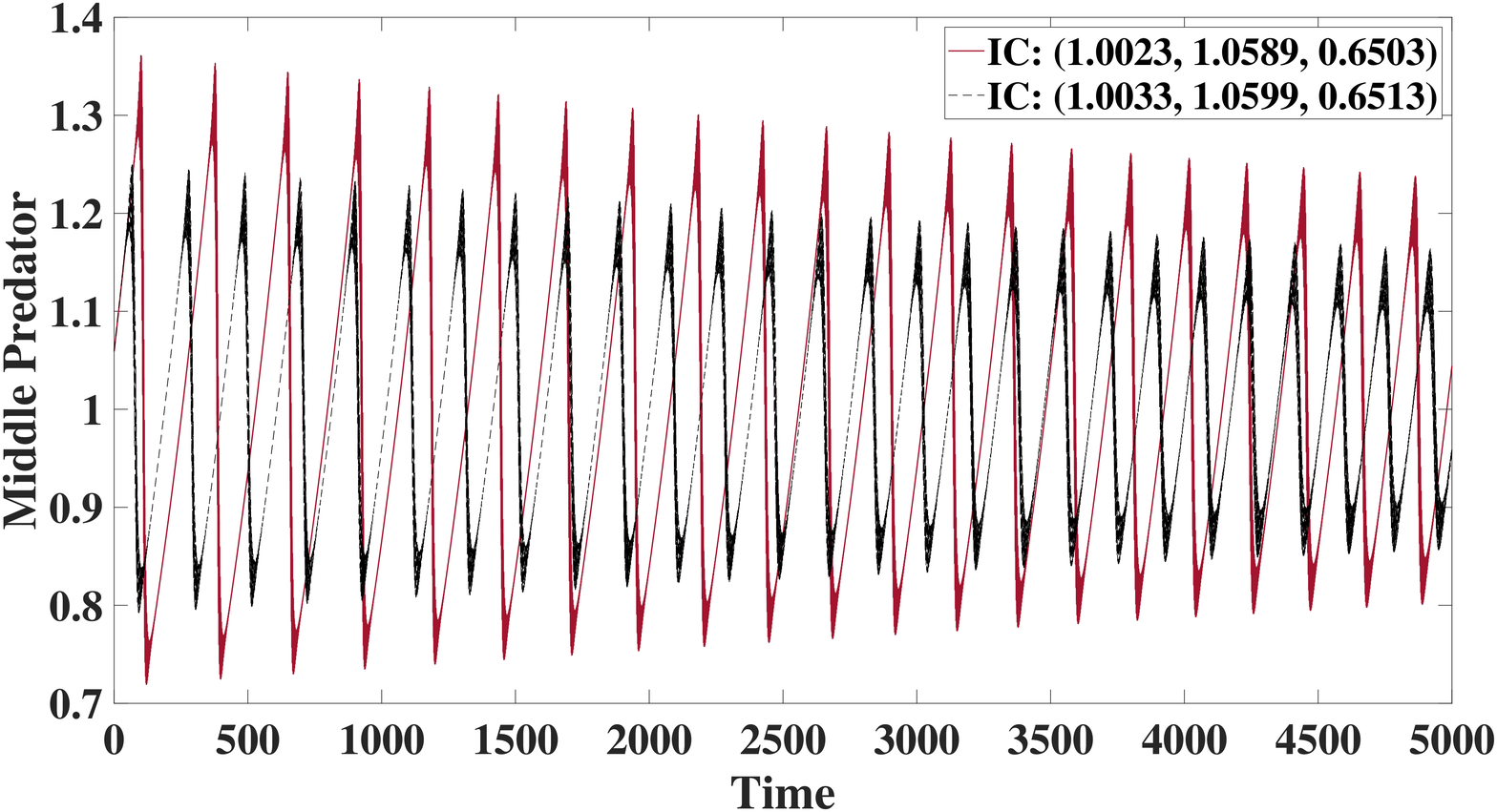}} \\
  \subfigure[]{\includegraphics[width=10cm,height=4cm]{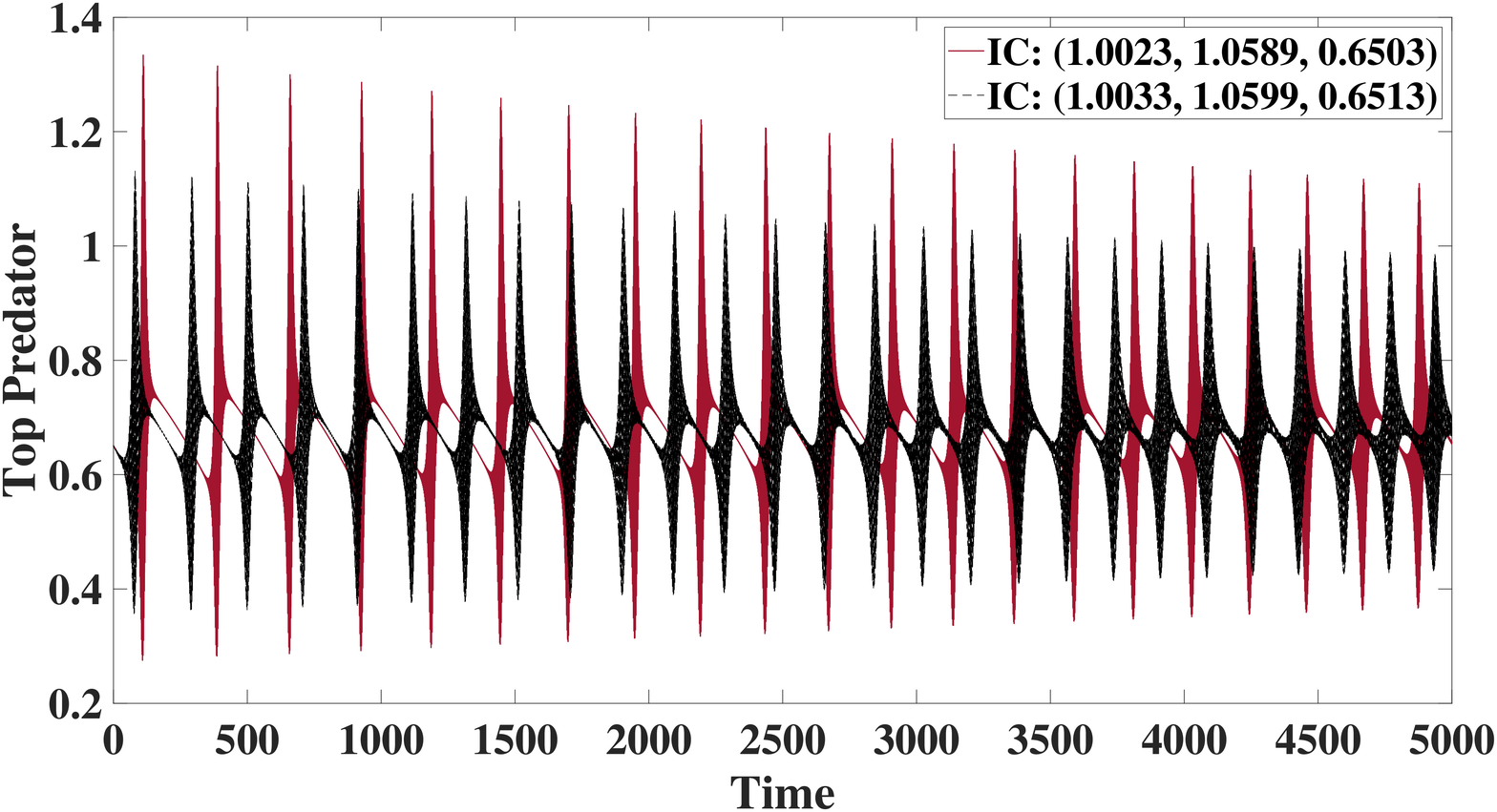}}   
  \label{fig1} 
\end{tabular}
\caption{  (a), (b) and (c):  Time series of ${x}_{1},{x}_{2}$, and ${x}_{3}$ for two nearby initial conditions ($(1.0023,1.0589,0.6503)$ and $(1.0023+10^{-3}, 1.0589+10^{-3}, 0.6503+10^{-3})$).}
\label{fig:1}  
\end{figure}
\\
Further, we display the dynamics of GLV system for three different set of parameters to characterize its parameter- sensitivity. Different sets of parameters involved in the system are taken as 
$$ (p, q, r) \in \{ (2.0451, 2.129,  2), (2.9851, 2.99, 2.1), (2.98098, 2.9799, 2)\}.$$
For simulation, we fix the initial condition at ${x}_{1}(0) = 1.0023,~{x}_{2}(0)= 1.0589,~{x}_{3}(0)= 0.6503$ for the GLV system. Figures \ref{fig:2}, \ref{fig:21} and \ref{fig:22} show  three dimensional attractor  and two dimensional projections of the system on $({x}_{1}, {x}_{2}), ({x}_{1},{x}_{3})$ and $({x}_{2}, {x}_{3})$ planes. From these figures, it can be inferred that the sensitivity of the system on parameters can help in restoring the hidden order out of its chaotic dynamics.
\begin{figure}[ht]
\centering
\begin{tabular}{cc}
 \subfigure[]{\includegraphics[width=6.5cm,height=3cm]{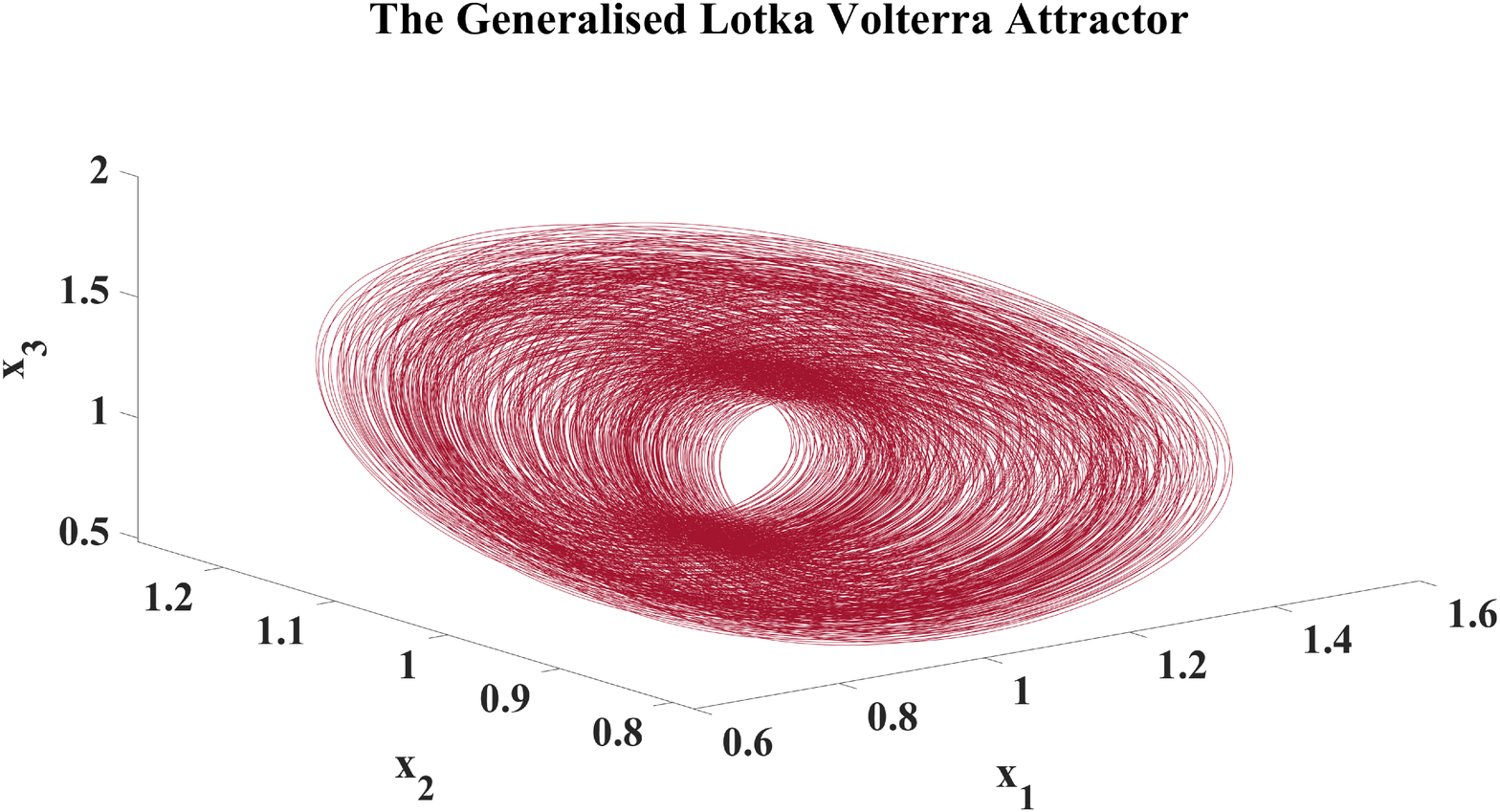}}  &
 \subfigure[]{\includegraphics[width=6.5cm,height=3cm]{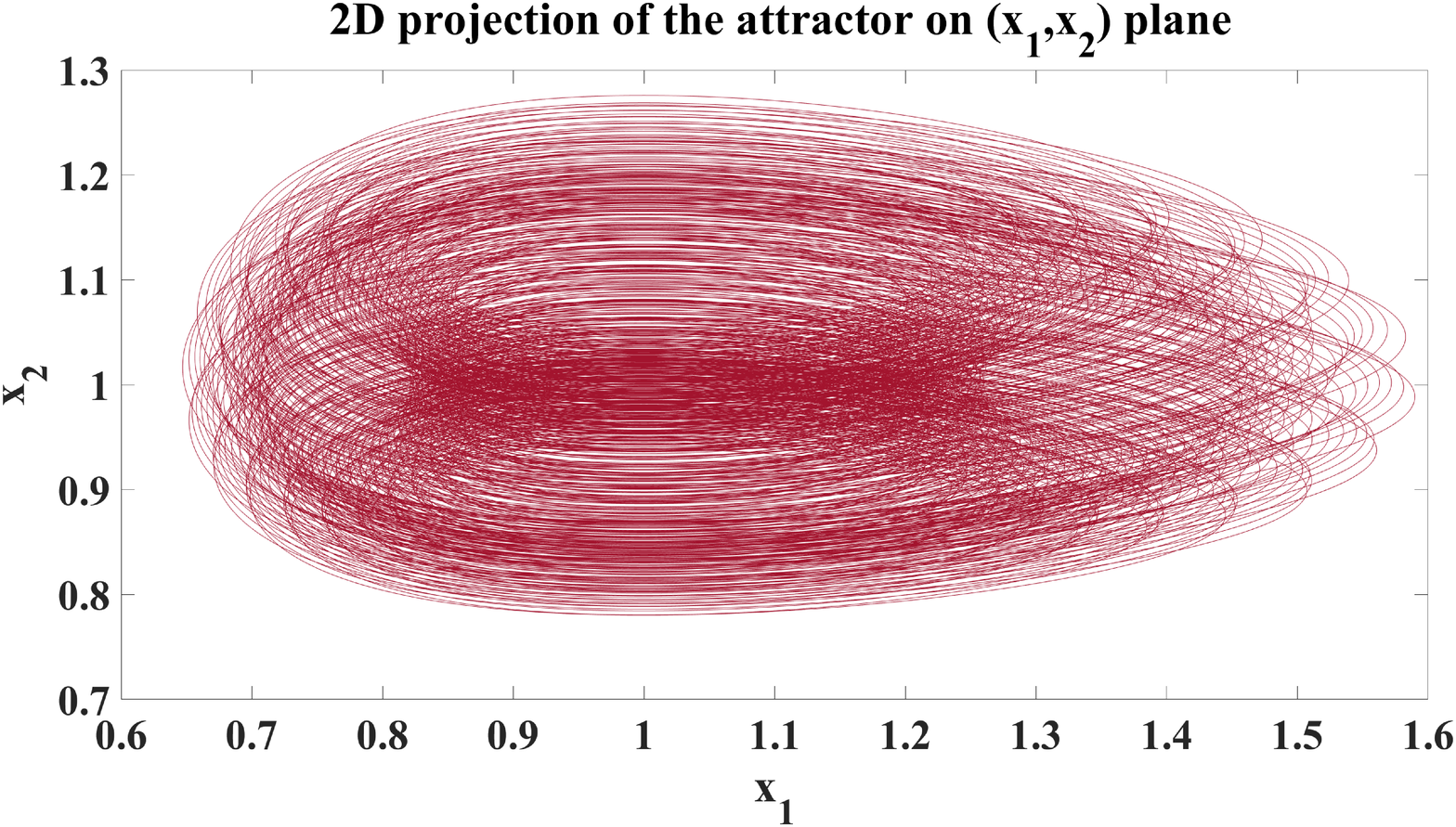}}    \\
 \subfigure[]{\includegraphics[width=6.5cm,height=3cm]{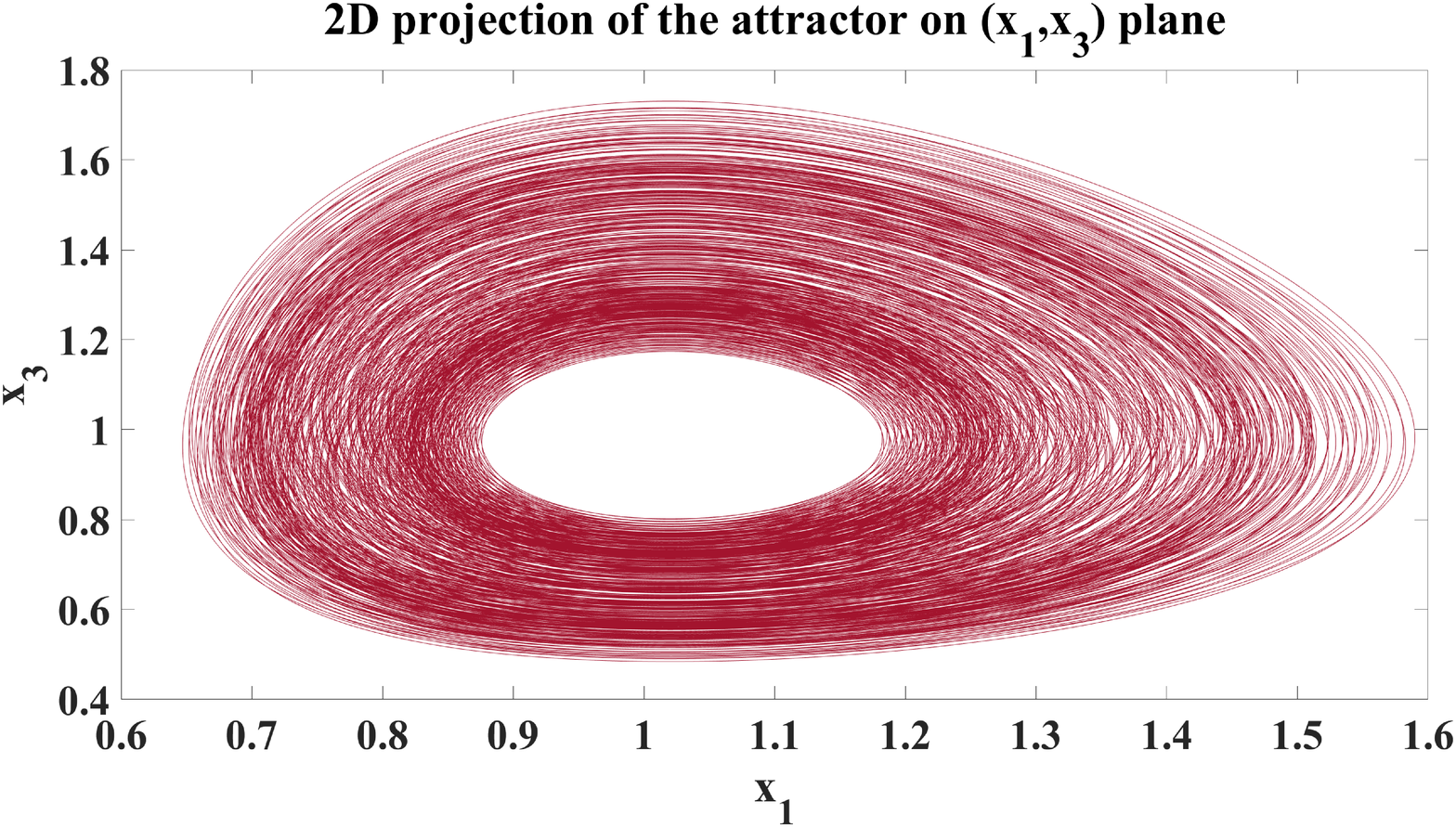}}    & 
  \subfigure[]{\includegraphics[width=6.5cm,height=3cm]{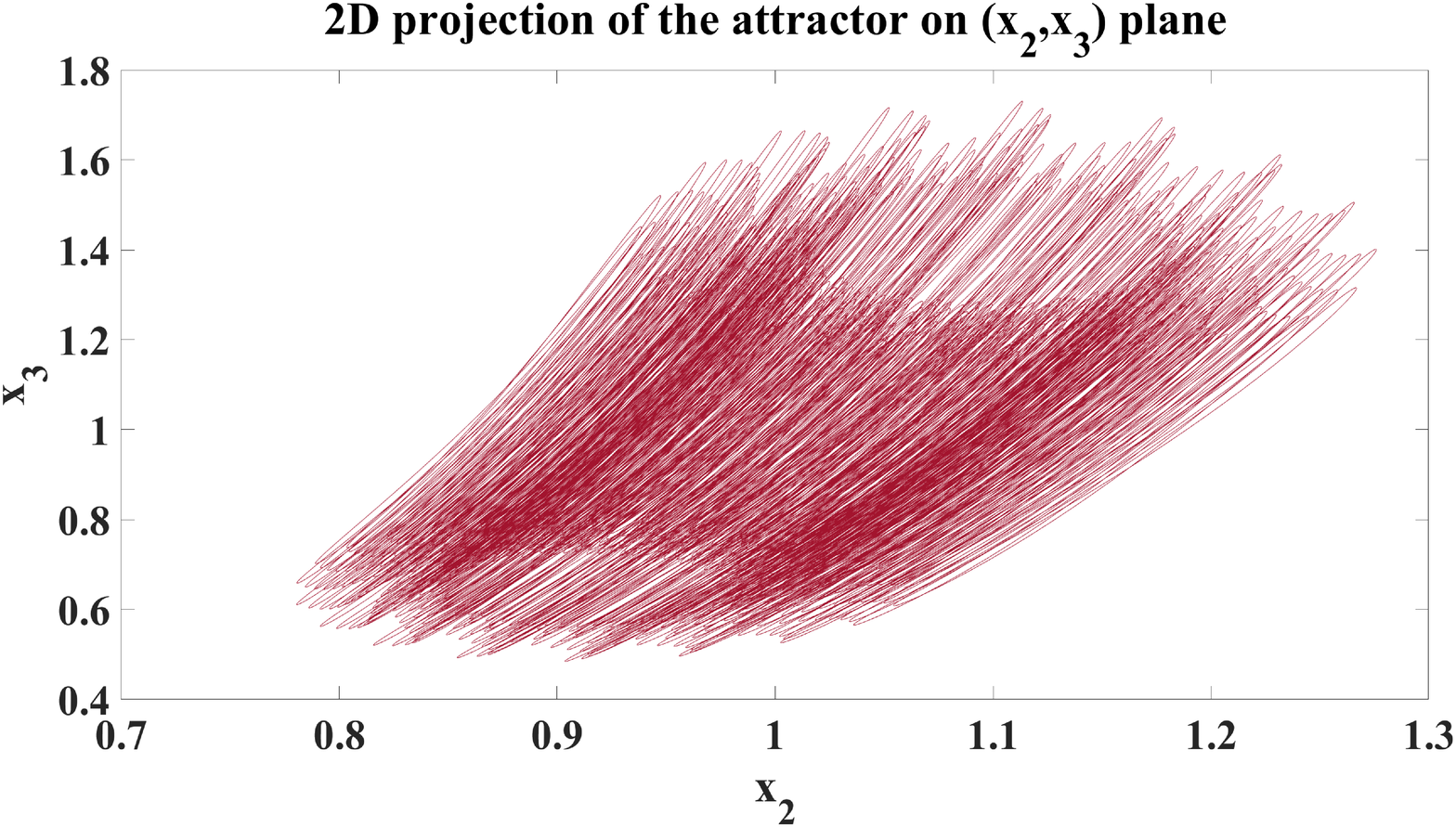}}  
\end{tabular}
\caption{(a): 3D attractor; (b), (c) and (d): 2D projections of the attractor on $({x}_{1}, {x}_{2}), ({x}_{1},{x}_{3})$ and $({x}_{2}, {x}_{3})$ planes respectively for $(p, q, r) = (2.0451, 2.129,  2)$.}
\label{fig:2} 
\end{figure}
 
\begin{figure}[ht]
\centering
\begin{tabular}{cc}
 \subfigure[]{\includegraphics[width=6.5cm,height=3cm]{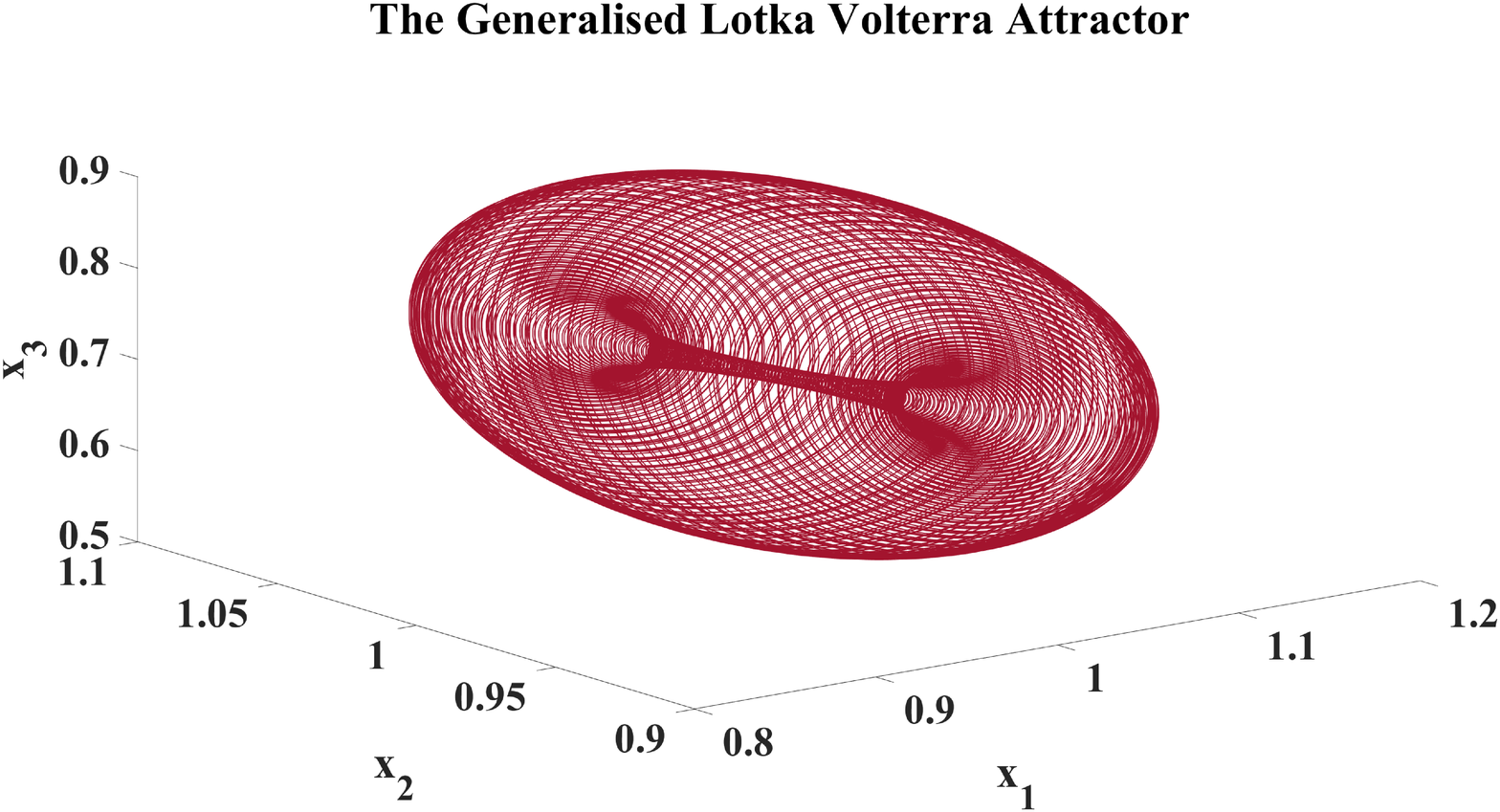}}  &
 \subfigure[]{\includegraphics[width=6.5cm,height=3cm]{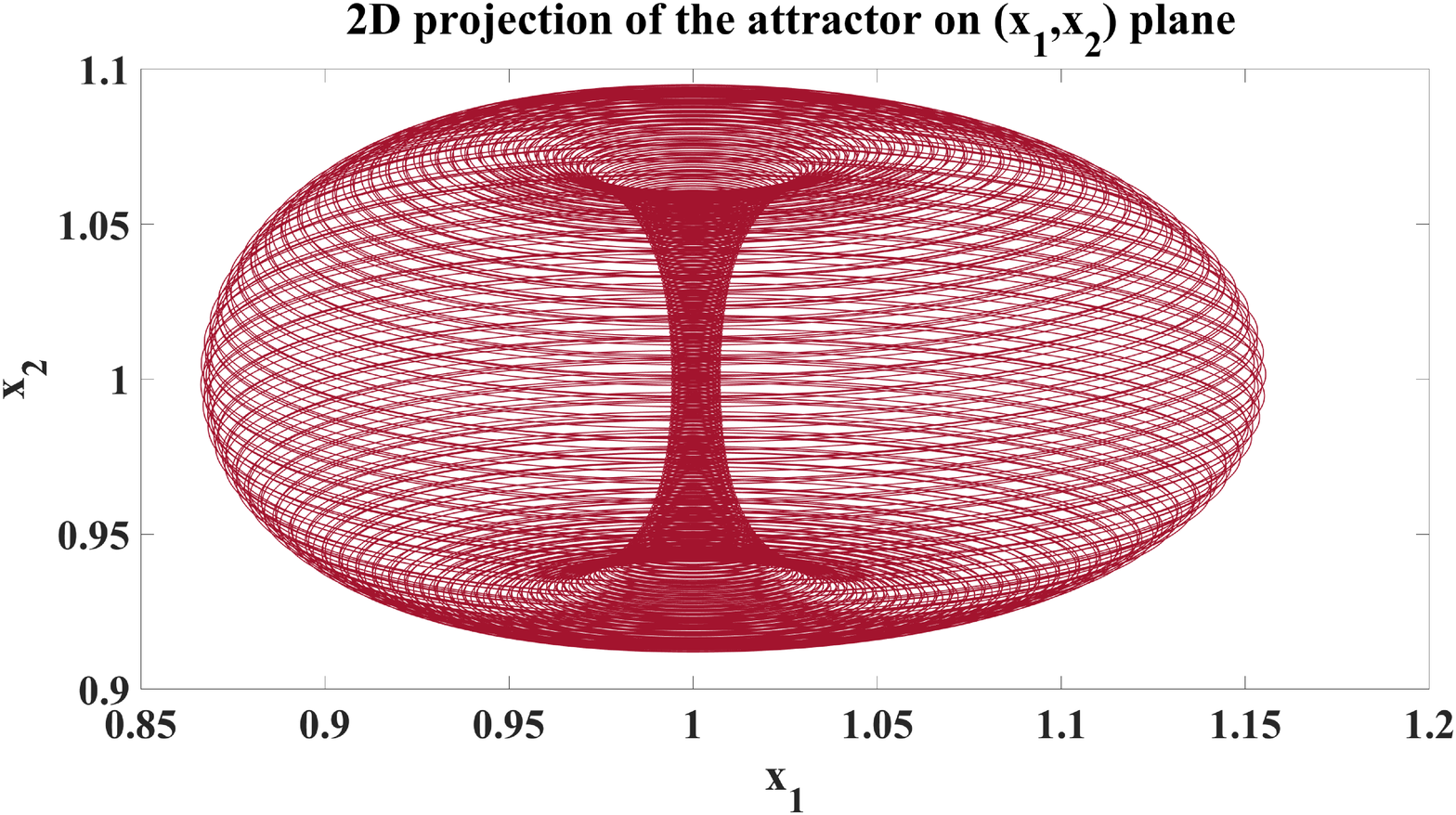}}   \\
  \subfigure[]{\includegraphics[width=6.5cm,height=3cm]{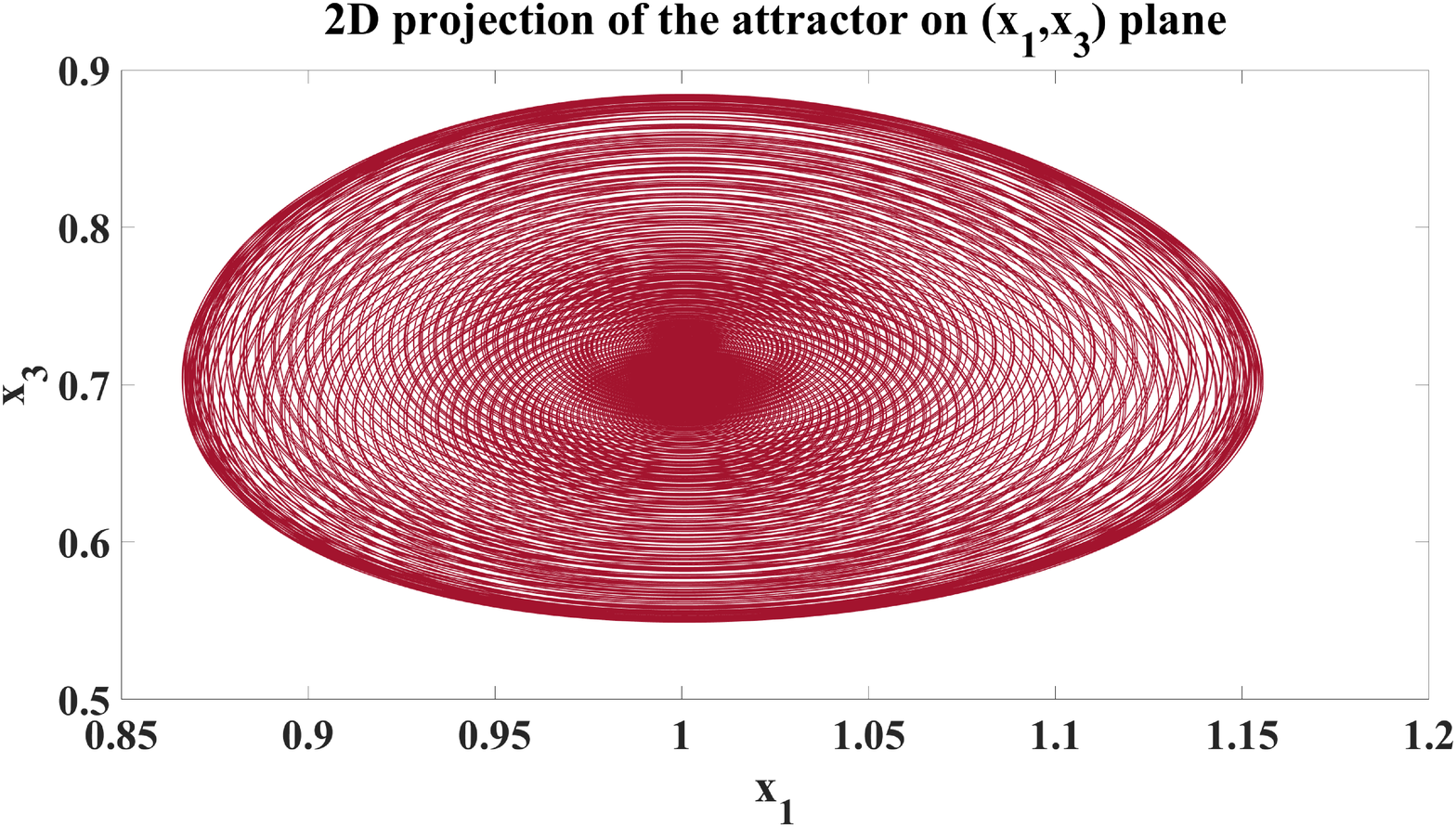}}  &
  \subfigure[]{\includegraphics[width=6.5cm,height=3cm]{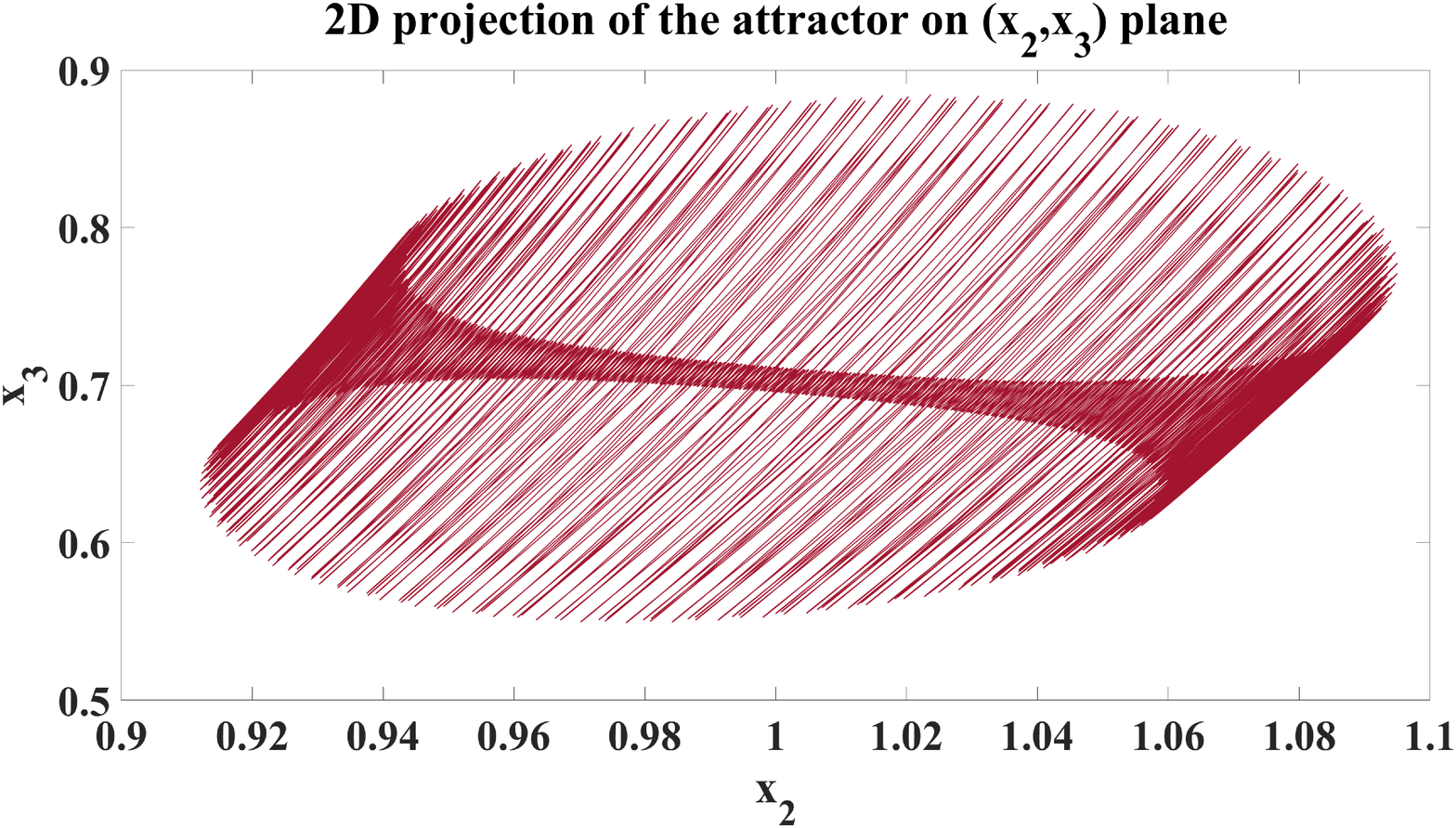}}   
\end{tabular}
\caption{(a): 3D  attractor ; (b), (c) and (d): 2D projections of the attractor on $({x}_{1}, {x}_{2}), ({x}_{1},{x}_{3})$ and $({x}_{2}, {x}_{3})$ planes respectively for  $(p,q,r) = (2.9851, 2.99, 2.1)$.}
\label{fig:21} 
\end{figure}

\begin{figure}[ht]
\centering
\begin{tabular}{cc}
 \subfigure[]{\includegraphics[width=6.5cm,height=3cm]{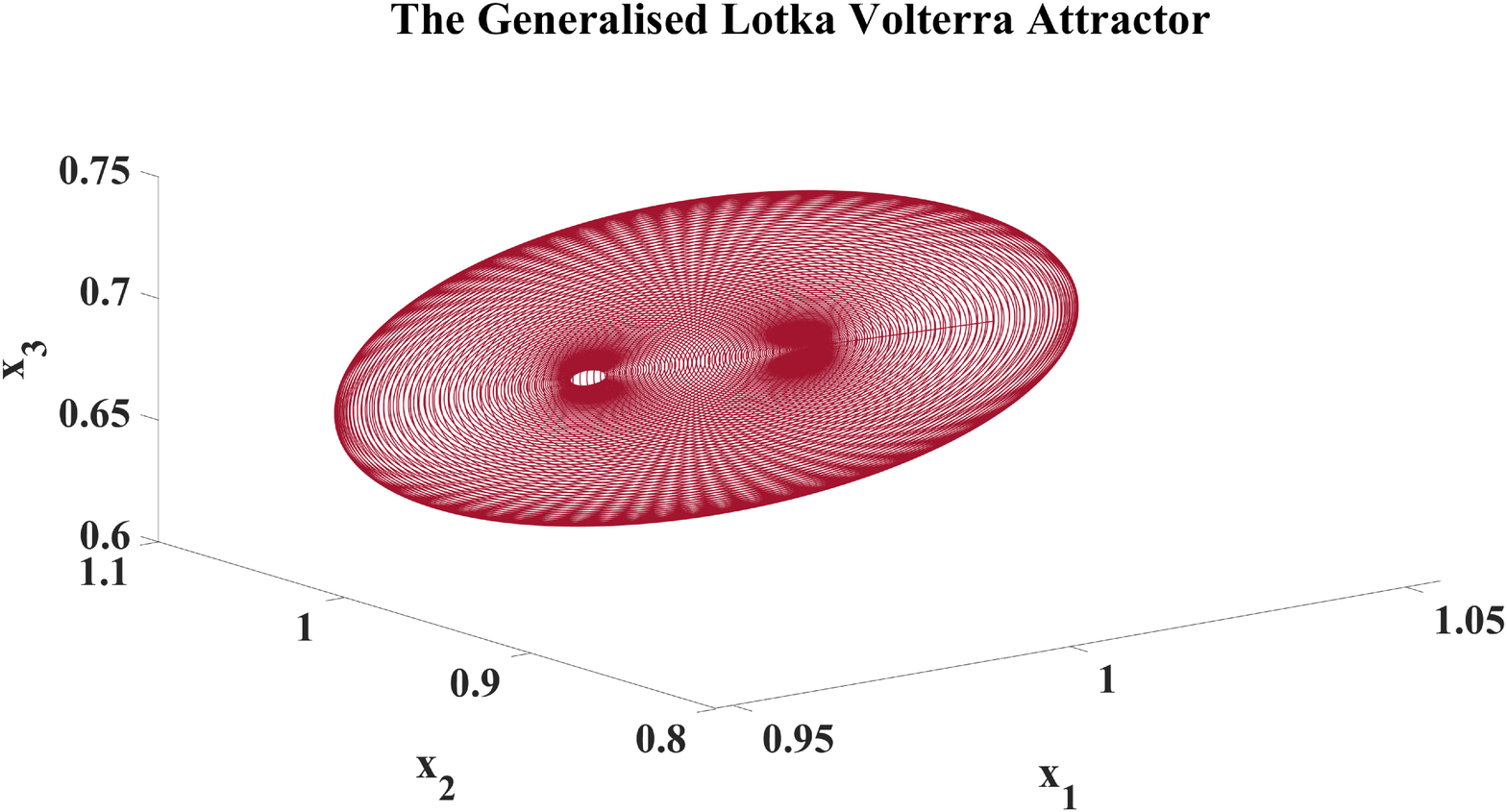}}  &
 \subfigure[]{\includegraphics[width=6.5cm,height=3cm]{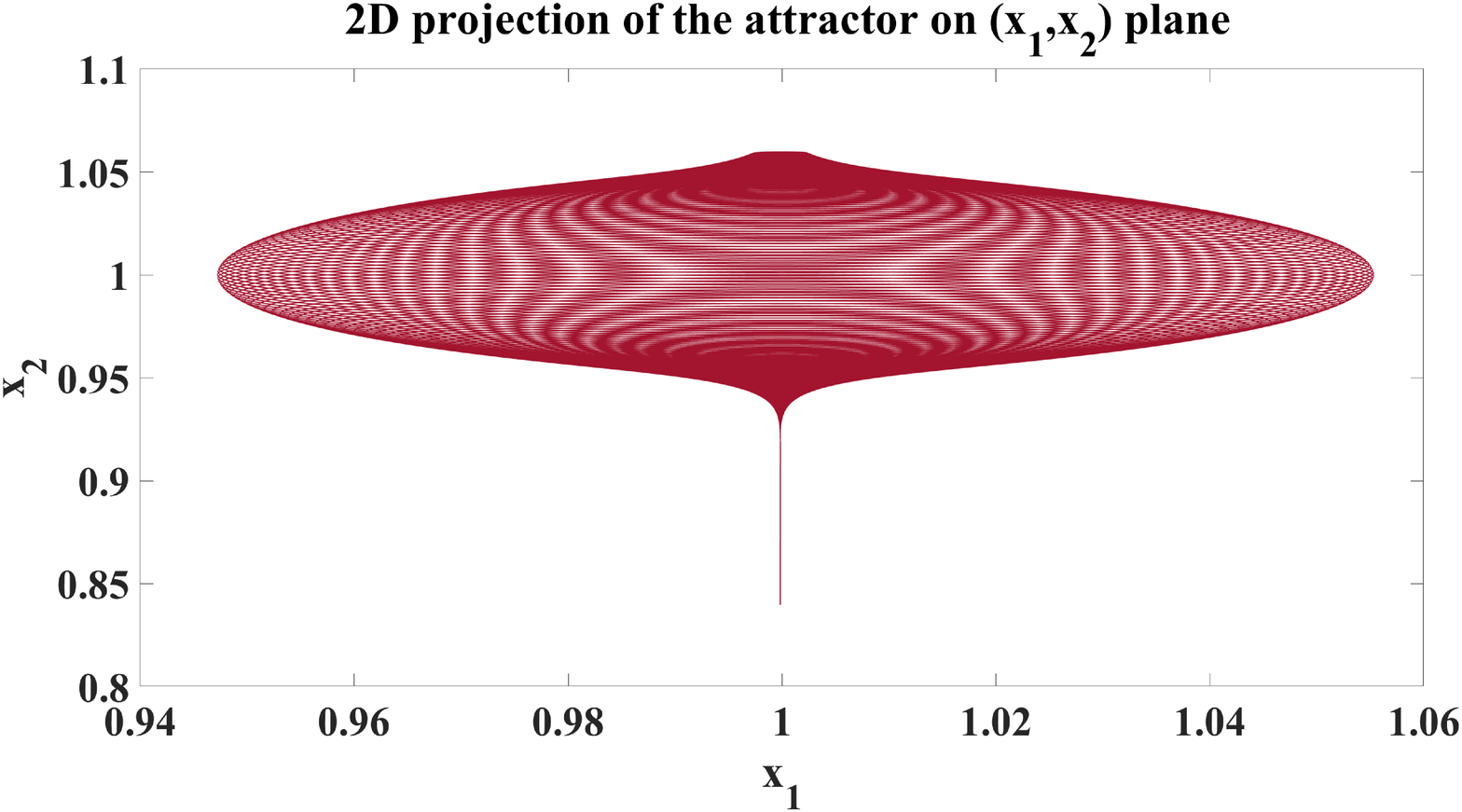}}   \\
  \subfigure[]{\includegraphics[width=6.5cm,height=3cm]{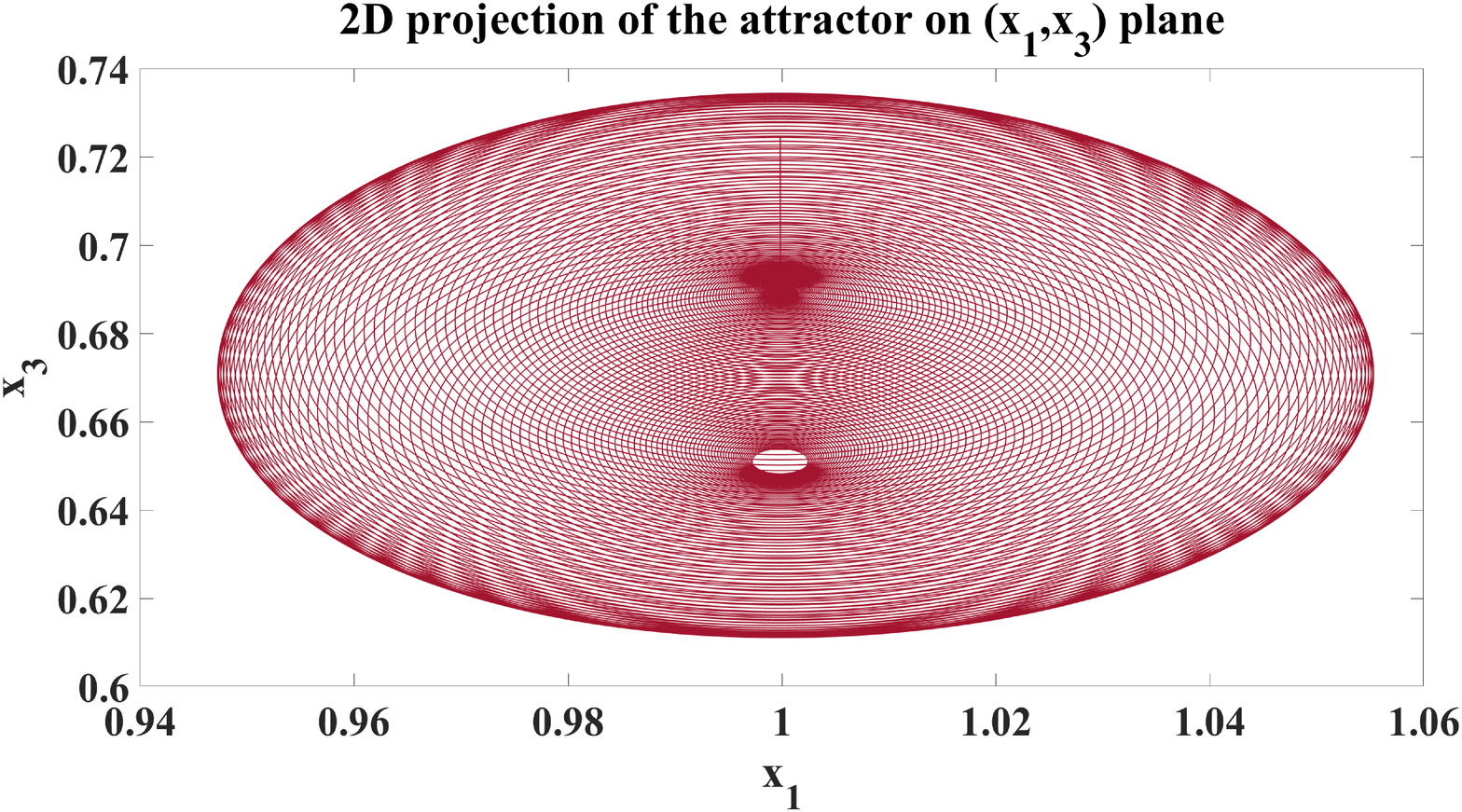}}  &
  \subfigure[]{\includegraphics[width=6.5cm,height=3cm]{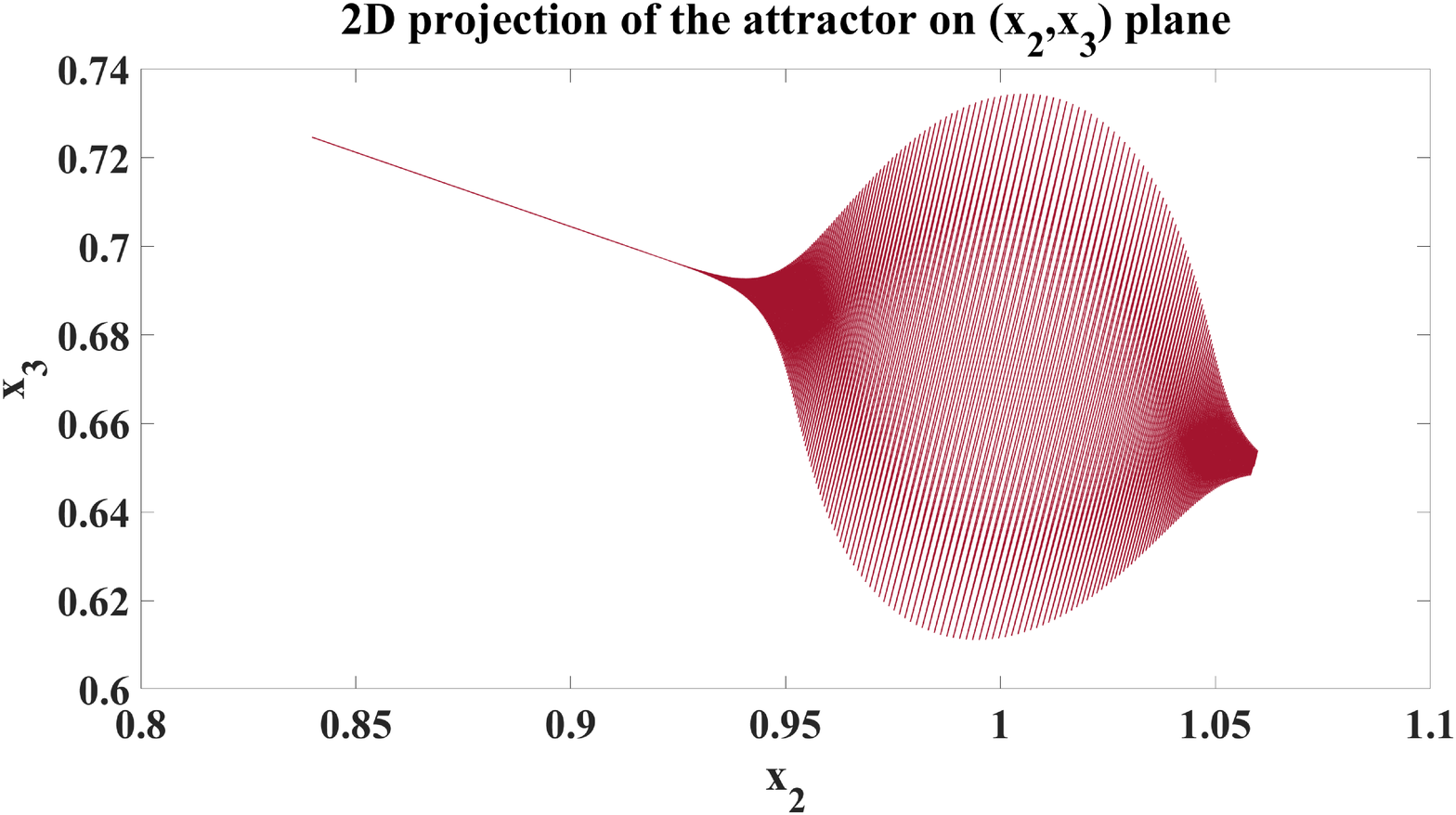}}   
\end{tabular}
\caption{(a): 3D attractor; (b), (c) and (d): 2D projections of the attractor on $({x}_{1}, {x}_{2}), ({x}_{1},{x}_{3})$ and $({x}_{2}, {x}_{3})$ planes respectively for  $(p,q,r) = (2.98098, 2.9799, 2)$.}
\label{fig:22} 
\end{figure}

Next subsections presents two other variants of GLV model with different functional responses.
\subsection{\label{sec2.2}Model with Cyrtoid type (HT II)  functional response}
To elucidate the role of functional response, we replace the linear interaction term  between prey and middle predator with Holling type II functional response ( $(\frac{{x}_{1}}{{x}_{1}+d}){x}_{2}$). The ecological meaning of the non-linear interaction of prey with middle predator is that the prey's contribution to the middle predator growth rate is $\frac{{x}_{1}{x}_{2}}{{x}_{1}+d}$. Using type II functional response, the dynamics of new GLV model are proposed as
\begin{equation}
\begin{split}
    \dot{{x}_{1}} & = {x}_{1}-\frac{{x}_{1}{x}_{2}}{{x}_{1}+d}+{r}{x}_{1}^{2}
       -{p}{{{x}_{1}}^{2}}{x}_{3},\\
    {\dot{{x}_{2}}} &= -{x}_{2} +(\frac{{x}_{2}{x}_{1}}{{x}_{1}+d}), \\ 
    {\dot{{x}_{3}}}& = -{q}{x}_{3}+{p}{x}_{3}{x}_{1}^{2}. 
    \end{split}
\end{equation}
For simulation, we take the parametric values and initial condition  as $p=2.514, q=2.9089, r=2.1990507, d=.00198$ and $ {x}_{1}(0)= 1.78  ,~ {x}_{2}(0)=   0.5020,~ {x}_{3}(0)=  1.01$ respectively. Figure \ref{fig:4} displays the three-dimensional phase portrait of GLV system with HT II functional response which is a `stable focus'. Thus, a change in functional response in GLV system can lead to stable dynamics for a suitable set of parameter values and initial conditions.

\begin{figure}[ht]
\centering
\begin{tabular}{c}
 \subfigure[]{\includegraphics[width= 8cm, height=4cm]{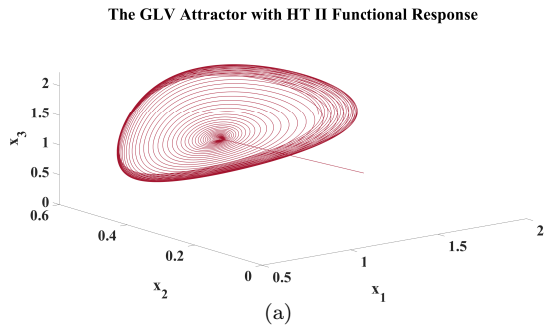}}   
\end{tabular}
\caption{(a) Three-dimensional attractor of GLV equations with  HT II functional response.}
\label{fig:4}
\end{figure}

\subsection{\label{sec2.3} Model with Sigmoid type (HT III) functional response}
Next, we change interaction term between prey and top predator population with Holling type III functional response. The inclusion of  Holling type III functional response increases the search activity of  top-predator for prey. With assumption of increasing prey density, the dynamics of new GLV model with Holling type III functional response are proposed as
\begin{equation}
\begin{split}
 \dot{{x}_{1}}& = {x}_{1}-{x}_{1}{x}_{2}+{r}{x}_{1}^{2}-{p}\frac{{x}_{1}^{2}{x}_{3}}{{x}_{1}^2+d},\\
    \dot{{x}_{2}} &= -{x}_{2} +{x}_{1}{x}_{2},\\ 
    \dot{{x}_{3}}& = -{q}{x}_{3}+{p}\frac{{x}_{1}^{2}{x}_{3}}{{x}_{1}^2+d}. 
\end{split}
\end{equation}
For parameter values $ p= 7.34,~ q= 2.0,~ r=0.507,~ d=3.198$, the system (3) has  `limit cycle'-like attractor which means model (3) has stable dynamics.
\begin{figure}[ht]
\begin{center}
 \includegraphics[width= \textwidth,height=5cm]{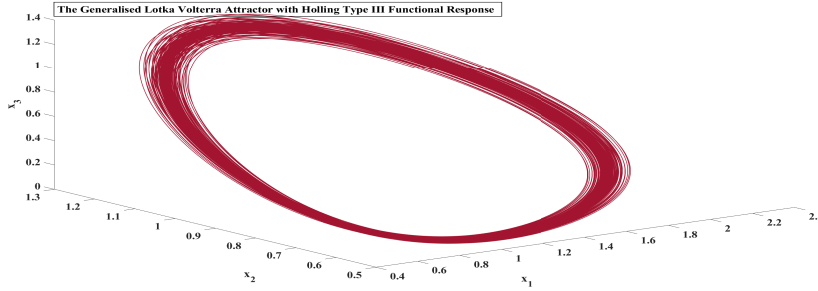}
 \caption{Three-dimensional attractor of generalised Lotka Volterra equations with HT III functional response.}
 \end{center}
 \label{fig:5} 
\end{figure}
\\
For each variation, we have found  different parameters values for which models (1), (2), and (3) show completely different dynamics. We infer that the GLV model's unstable dynamics with linear function response can be turned into stable dynamics when linear functional response is altered by HT II or HT III. However, this may not be very effective for arbitrarily given scenario. A case in point-  predators usually do not follow predation rate as  HT II or HT III functional response when the prey population is in abundance. To overcome this problem, we control chaos in the model (1) through synchronization and achieve complete replacement synchronization in two coupled GLV models following linear functional response. Since GLV system has been shown to have a chaotic attractor for various set of parameters. Out of these sets, we pick the set of  parameters $ p= 2.9851, q= 3, r=2$ as in \cite{elsadany2018dynamical} for further study.
\section{\label{sec3} Mathematical Properties}
In this section, we discuss the dynamical and analytical properties of system $(1)$  including positive Lyapunov exponent, equation of slow manifold, in-variance, dissipation, stability of feasible equilibrium points, and  control of instability of unstable equilibrium points. 
\subsection{\label{sec3.1} Lyapunov exponents} 
For three-dimensional system, the local behaviour of the dynamics varies  along three orthogonal directions in state space. In a given chaotic system , nearby initial conditions may be moving apart along one axis, and moving together along another. The Lyapunov exponent describes the average rate of separation between two nearby trajectories with different initial conditions subject to a flow \cite{solari1996nonlinear}. Where a positive Lyapunov exponent confirms chaos in the system. For simulation, we take the parameter values of the system $(1)$ as $ p = 2.0451, q = 2.129, r = 2 $. The dynamics of Lyapunov exponents are shown in figure \ref{fig:6}.
\begin{figure}[ht]
\begin{center}
\includegraphics[width=\textwidth, height= 7cm]{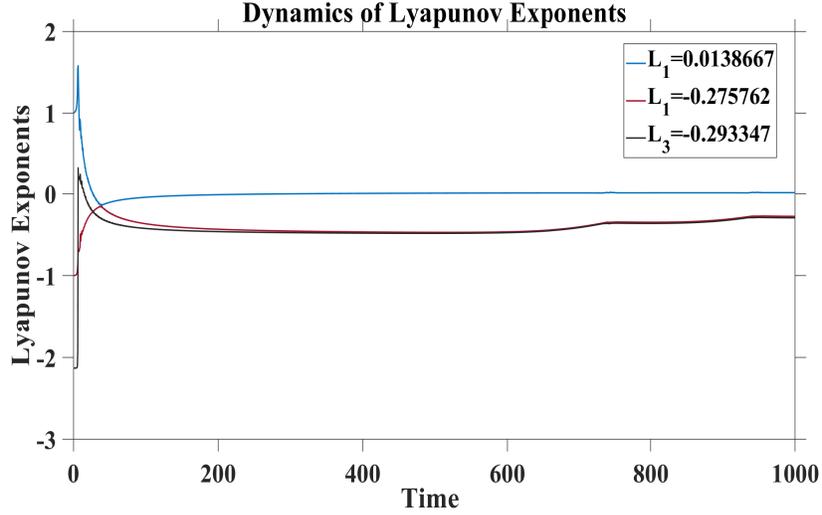}
\caption{Dynamics of Lyapunov spectrum of system $(1)$.}
\label{fig:6} 
\end{center}
\end{figure}
The Lyapunov exponents of model (1) are as follows 
$$
{L}_{1} = 0.0138667>0, {L}_{2} = -0.275762 <0, {L}_{3}=-0.293347 < 0.  
$$
where $L_{1}$ is the indicator of chaos in the system (1).
 \subsection{\label{sec:leve22}Equation of slow manifold}
The infusion of geometric and topological techniques in chaos theory motivates mathematicians to study the underlying geometric structures. In this line, expression of slow manifold permits to restore a part of the deterministic property of the system that was lost because of SDIC. To find an equation of slow manifold, we consider the system (1) as slow-fast autonomous dynamical system (S-FADS). In S-FADS, variables are separated into two groups:, one is group of fast variable and other is of slow variables where slow variables are used to determine the behaviour of whole system. To get the equation, we consider that the slow manifold is locally defined by a plane orthogonal to tangent system's left fast eigenvector.  Under the set of parameter values $ p= 2.9851, q= 3, r=2$,  the equations of GLV model (1) can be given  as 
\begin{equation}
    \begin{array}{ll}
    \dot{{x}_{1}}= {x}_{1}(1-{x}_{2}+2{x}_{1}-2.9851{x}_{3}{x}_{1}),\\
    \dot{{x}_{2}}= {x}_{2}(-1 +{x}_{1}), \\ 
    \dot{{x}_{3}}= {x}_{3}(-3+2.9851{x}_{1}^{2}). 
\end{array} 
\end{equation}  
 The Jacobian matrix $J$ at point $ \textbf{x}= ({x}_{1},{x}_{2},{x}_{3})^T$ is obtained as
$$J=\begin{bmatrix}
1-{x}_{2}+4{x}_{1}-5.9702{x}_{1}{x}_{3} & -{x}_{1}  & -2.9851{x}_{1}^{2}\\
                               {x}_{2}        &-1+{x}_{1} &  0\\
          5.9702{x}_{1}{x}_{3}&    0       &-3+2.9851{x}_{1}^{2}\\
    \end{bmatrix}$$
Let ${\lambda}_{1}({x}_{1},{x}_{2},{x}_{3})$ be a real, negative and dominant Eigen value ( i.e, fast Eigen value) for Jacobian matrix in a large part of attractor's phase space domain.
\\
 Furthermore, we assume that ${\lambda}_{2}({x}_{1},{x}_{2},{x}_{3})$ and ${\lambda}_{3}({x}_{1},{x}_{2},{x}_{3})$ be two slow Eigen values. Then Eigen vector ${Z}_{{\lambda}_{1}}^{T}$ corresponding to fast Eigen value ${\lambda}_{1}$ of $J^{T}({x}_{1},{x}_{2},{x}_{3})$ is given by
\begin{equation}
|J - {{\lambda}_{1}}I|~{Z}_{{\lambda}_{1}} = 0.
\end{equation}
where $I$ is $3 \times 3$ identity matrix. Equation (6) gives,
$${Z}_{{\lambda}_{1}}^{T}=\begin{bmatrix}
(-1+{x}_{1}-\lambda_{1})(-3+2.9851{x}_{1}^{2}-\lambda_{1})\\
                          x_{1}(-3+2.9851{x}_{1})^{2})  \\
          2.9851{x}_{1}^{2}(-1+{x}_{1}-\lambda_{1})\\
    \end{bmatrix}.$$
On the attractive parts of phase space (where $J(\textbf{x})$ has a fast Eigen value $\lambda_{1}$), the equation of the slow manifold is given by  
\begin{equation}
\dot{\textbf{x}}(t).{Z}_{{\lambda}_{1}}^{T} = 0.
\end{equation}
We use the equation $(7)$ to define the equation of slow manifold. With the substitution of $\dot{\textbf{x}}(t)$ and ${Z}_{{\lambda}_{1}}^{T}$ in the equation $(7)$, we write the equation of slow manifold as
\begin{equation} 
\begin{split}
 & {\lambda}_{1}^{2}({x}_{1}-{x}_{1}{x}_{2}+2{x}_{1}^{2}-2.9851{x}_{1}^{2}{x}_{3}) +  {\lambda}_{1}(-4{x}_{1}+7{x}_{1}^{2} 
 -4.9851{x}_{1}^{3}-3{x}_{1}{x}_{2}+2.9851{x}_{1}^{3}{x}_{2} -
 \\ &5.97020{x}_{1}^{4}-2.985100{x}_{1}^{2}{x}_{3}+2.9851{x}_{1}^{3}{x}_{3})+(3{x}_{1}+{x}_{1}^{2}-8.985100{x}_{1}^{3}-2.9851{x}_{1}^{4}+5.970200{x}_{1}^{5} \\
 & +8.95530{x}_{1}{x}_{3}-17.910600{x}_{1}^{2}{x}_{3}-0.044478{x}_{1}^{3}{x}_{3}+17.821644{x}_{1}^{4}{x}_{3}-8.91082{x}_{1}^{3}{x}_{3})=0.
\end{split}
\end{equation}
 where ${\lambda}_{1}$ is fast Eigen value of $J(\textbf{x})$. Because $\lambda_{1} (x_{1}, x_{2}, x_{3})$ is uncertain Eigen value, it is not easy to use this implicit equation to draw a slow manifold representation in the three dimensional phase space.

\subsection{\label{sec3.3}Invariance property}
\begin{theorem}
Let the system in vector notation is given as 
 \begin{equation}
  \dot{\textbf{x}}(t) = H(\textbf{x(t)}) = \begin{bmatrix} 
             {H}_{1}({x}_{1},{x}_{2},{x}_{3}) \\
              {H}_{2}({x}_{1},{x}_{2},{x}_{3}) \\
              {H}_{3}({x}_{1},{x}_{2},{x}_{3}) 
    \end{bmatrix} 
    \end{equation}
\[  
\begin{split}
{H}_{1}({x}_{1},{x}_{2},{x}_{3}) &= {x}_{1}(1-{{x}_{2}}+{r}{x}_{1}-p {x}_{3}{x}_{1}),\\
   {H}_{2}({x}_{1},{x}_{2},{x}_{3}) &= {x}_{2}(-1 +{x}_{1}), \\ 
    {H}_{3}({x}_{1},{x}_{2},{x}_{3}) &= {x}_{3}(-{q}+{p}{{x}_{1}}^{2}). 
\end{split}\]

  where ${H}_{1}, {H}_{2}$ and ${H}_{3}$ are continuously differentiable. Assume that $\textbf{\textit{H}}$ is locally Lipschitz and generates a flow $\phi_{t}(\textbf{x})$. Let 
  $$L: D \subset{R}^{3} \rightarrow {R}^{3}$$
 be a continuously differentiable function on a domain $D \subset {R}^{3} $ such that $ \dot{L}(\textbf{x})\leq 0$ in $D$, then the largest invariant set $\Sigma \subset D $ is the set; where ${\nabla{L}}.{H(\textbf{x})} = 0 ~~ \forall \textbf{x} \in \Sigma$.
\end{theorem}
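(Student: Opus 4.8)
The plan is to read this statement as LaSalle's invariance principle for the flow $\phi_t$ generated by $H$, and to prove it by monitoring $L$ along orbits. (I take $L$ to be scalar-valued, $L:D\to\mathbb{R}$, so that the pairing $\nabla L\cdot H$ is defined.) The one computation needed at the outset is the chain rule: for every $\textbf{x}\in D$,
\[
\dot{L}(\textbf{x})=\left.\frac{d}{dt}\right|_{t=0}L(\phi_t(\textbf{x}))=\nabla L(\textbf{x})\cdot\dot{\textbf{x}}=\nabla L(\textbf{x})\cdot H(\textbf{x}).
\]
Thus the hypothesis $\dot L\le 0$ says exactly that $t\mapsto L(\phi_t(\textbf{x}))$ is non-increasing along every orbit that stays in $D$, and the desired conclusion $\nabla L\cdot H\equiv 0$ on $\Sigma$ is equivalent to the assertion that $L$ is \emph{constant} along every orbit contained in the largest invariant set.

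First I would fix $\textbf{x}_0\in\Sigma$. Since $\Sigma$ is invariant, the whole orbit $\{\phi_t(\textbf{x}_0):t\in\mathbb{R}\}$ lies in $\Sigma\subset D$, so $g(t):=L(\phi_t(\textbf{x}_0))$ is defined for all $t$ and is monotone non-increasing. To extract a finite limit I would use the boundedness furnished by the dissipative character of system (1): the orbit is confined to a compact set, on which the continuous function $L$ is bounded. Hence $g$ is bounded and monotone, and the limits $g(+\infty)$ and $g(-\infty)$ both exist and are finite.

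The decisive step is to transfer constancy from the limit sets to $\Sigma$. Any $\textbf{y}\in\omega(\textbf{x}_0)$ has the form $\textbf{y}=\lim_n\phi_{t_n}(\textbf{x}_0)$ with $t_n\to\infty$, so continuity gives $L(\textbf{y})=\lim_n g(t_n)=g(+\infty)$; thus $L\equiv g(+\infty)$ on the nonempty, compact, invariant set $\omega(\textbf{x}_0)$, and constancy of $L$ along the orbits of this invariant set forces $\dot L=\nabla L\cdot H=0$ there. The same reasoning applied to $\alpha(\textbf{x}_0)$ with $g(-\infty)$ handles backward time. The point is that on the chaotic attractor — the relevant largest invariant set here — orbits are recurrent: they return arbitrarily close to their starting points, so for suitable $t_n\to\infty$ one has $\phi_{t_n}(\textbf{x}_0)\to\textbf{x}_0$, and monotonicity of $g$ collapses to $g\equiv g(0)$, yielding $\dot L(\textbf{x}_0)=0$ directly.

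The main obstacle is precisely this recurrence-to-constancy reduction, and it is where care is genuinely needed. Monotonicity and boundedness of $g$ alone do \emph{not} force $\dot L$ to vanish pointwise: a strictly decreasing bounded orbit is entirely consistent with $\dot L<0$, so one cannot conclude by staring at a single orbit. What rescues the statement is that the invariant pieces actually carrying the dynamics — the $\omega$- and $\alpha$-limit sets, and more generally the recurrent attractor — are sets on which a non-increasing $L$ is pinned to a single value. I would therefore make the argument rigorous by (i) using the dissipativity of (1) to guarantee compactness and the finiteness of $g(\pm\infty)$, and (ii) invoking the standard facts that $\omega$- and $\alpha$-limit sets of bounded orbits are nonempty, compact and invariant, with $L$ constant on each. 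These two ingredients are the crux; the remainder is the routine chain-rule bookkeeping recorded above.
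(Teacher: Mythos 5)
Your chain-rule computation and the constancy of $L$ on limit sets are both correct, but the decisive step --- passing from ``$\nabla L\cdot H=0$ on $\omega(\textbf{x}_0)\cup\alpha(\textbf{x}_0)$'' to ``$\nabla L\cdot H(\textbf{x}_0)=0$ for \emph{every} $\textbf{x}_0$ in the largest invariant set'' --- is a genuine gap, and it cannot be repaired, because under your (LaSalle-style) reading the statement is false. An invariant set is not made up of recurrent points: it generically contains connecting orbits, along which a non-increasing $L$ may be strictly decreasing. Concretely, take $n=1$, $H(x)=x(1-x)$ and $L(x)=x^{3}/3-x^{2}/2$, so that $\nabla L\cdot H=-x^{2}(1-x)^{2}\le 0$ on all of $\mathbb{R}$; the interval $[0,1]$ is invariant (it is a heteroclinic orbit together with its two endpoint equilibria), hence lies in the largest invariant set, yet $\nabla L\cdot H<0$ at every point of $(0,1)$. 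The same structure is present in the GLV system itself: the paper shows the origin is a saddle, and orbits in its unstable manifold belong to any invariant set containing the attractor without being recurrent, so your appeal to recurrence ``on the chaotic attractor'' does not cover them. (The correct LaSalle principle asserts that bounded trajectories converge to the largest invariant subset \emph{of} $E=\{\textbf{x}:\nabla L\cdot H(\textbf{x})=0\}$; it does not assert that the largest invariant subset of $D$ is contained in $E$.) A secondary weakness: you invoke dissipativity of system (1) to get compactness of orbits, but the paper only establishes dissipativity conditionally (its inequality involves the state variables), so boundedness of complete orbits is not actually available as a hypothesis.

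Separately, your route bears no relation to the paper's own proof, which never touches limit sets or monotonicity of $L$ at all. The authors take the specific function $L=({x}_{1}^{2}+{x}_{2}^{2}+{x}_{3}^{2})/2$, and then the real work is a tangency computation: on the coordinate plane $\Sigma=\{x_3=0\}$ the normal is $(0,0,1)$ and $(0,0,1)\cdot(\dot{x}_1,\dot{x}_2,\dot{x}_3)=x_3(-q+p{x}_{1}^{2})\big|_{x_3=0}=0$, with the analogous identities on the other two coordinate planes; hence each coordinate plane is invariant, no trajectory starting in $\mathbb{R}^{3}_{+}$ can cross one, and therefore $\mathbb{R}^{3}_{+}$ is an invariant set. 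In other words, despite the LaSalle-flavoured wording of the theorem, what the paper actually proves (and later uses) is positivity of solutions --- invariance of the positive octant --- and the function $L$ plays essentially no role in that argument. So your proposal both attempts a different (and, as stated, unprovable) assertion and misses the positivity content that the theorem is really there to deliver.
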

 \begin{proof}  Consider 
  $$L: D \subset{R}^{3} \rightarrow {R}^{3}$$
   be a continuously differentiable function on a domain $ D \subset {R}^{3} $ and defined as
   \begin{equation}
    L({x}_{1},{x}_{2},{x}_{3}) = \frac{{x}_{1}^2+{x}_{2}^2+{x}_{3}^2}{2}.
   \end{equation}
 Equation (9) gives,
 \begin{equation}
 \dot{L}{({x}_{1},{x}_{2},{x}_{3})} = {x}_{1}\dot{{x}}_{1}+{x}_{2}\dot{{x}}_{2}+{x}_{3}\dot{{x}}_{3}.
 \end{equation}
 The set  $D \subset{R}^{3}$ is said to be an invariant set under the flow $\phi_{t}$ if
 for any point $\textbf{x} \in D$
  $$ ~\phi_{t}(\textbf{x}) \in D ~\forall~ t \in {R}.$$
 Let $\Sigma$ be a smooth closed surface without boundary in $D \subset{R}^{3}$ and suppose that $\textbf{\textit{n}}$ is a normal vector to the surface ${\Sigma}$ at $({x}_{1},{x}_{2},{x}_{3})$. If we have 
 \begin{equation}
   \textbf{\textit{n}}.<\dot{x}_{1},\dot{x}_{2},\dot{x}_{3}>=0 ~~ \forall~~({x}_{1},{x}_{2},{x}_{3}) \in \Sigma.
 \end{equation}
Let us consider $\Sigma$ be the ${x}_{1}{x}_{2}$ plane i.e. ${x}_{3} = 0$. Note that the vector $\textbf(0, 0, 1)$ is always normal to $\Sigma$ and at the point $({x}_{1},{x}_{2}, 0) \in \Sigma$. So we have,
$$({\dot{x}_{1}},{\dot{x}_{2}},{\dot{x}_{3}})= ({{x}_{1}}(1-{x}_{2}+r{x}_{1}-p {x}_{3} {x}_{1}),{{x}_{2}}(-1 +{x}_{1}),{0}).$$
Thus,
$$\langle{({0},{0},{1}).({{x}_{1}}(1-{x}_{2}+r{{x}_{1}}-p{{x}_{3}}{{x}_{1}}),{{x}_{2}}(-1 +{x}_{1}),{0})} \rangle = {0}.$$ 
Similar arguments can be verified for ${{x}_{1}}$ and ${{x}_{2}}$ planes which directs that each coordinate plane is an invariant subset. It implies that for any given positive initial condition, ${{x}_{1}(t)}, {{x}_{2}(t)}$ and ${{x}_{3}(t)}$ are positive for all ${t}$ that is any trajectory starting in ${R}^{3}_{+}$ can not cross the co-ordinate planes and it shows that ${R}^{3}_{+}$ is an invariant set for the system. 
\end{proof} 
\subsection{\label{sec3.4}Dissipation}
\begin{theorem}
Consider the autonomous vector field 
$${\dot{\textbf{x}}(t)= H(\textbf{x})}~ for ~\textbf{x}\in {R}^{3},$$
 and assume that it generates a flow $\phi_{t}(\textbf{x})$. Let ${D}_{0}$ is a domain in ${R}^{3}$ which is supposed to have a volume ${V}_{0}$, and ${\phi}_{t}({D}_{0})$ is its evolution under the flow. If $V(t)$ is the volume of ${D}_{t}$, then the time rate of change of volume is given as
$$|\frac{d{V}}{d{t}}|_{t=0} = \int_{{D}_{0}}{\nabla.{H}} d{\textbf{x}}. $$ 
The system (1) is dissipative if its time-$t$ map decreases volume for all ${t}>{0}$.
\end{theorem}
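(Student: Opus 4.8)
The plan is to prove the Liouville volume-rate formula by transporting the time-dependent volume integral back to the fixed reference domain $D_0$ and then differentiating. First I would write $V(t) = \int_{\phi_t(D_0)} d\textbf{y}$ and perform the change of variables $\textbf{y} = \phi_t(\textbf{x})$ with $\textbf{x}$ ranging over $D_0$. Denoting by $J_t(\textbf{x}) = \partial \phi_t(\textbf{x}) / \partial \textbf{x}$ the Jacobian matrix of the time-$t$ flow map, the change-of-variables theorem gives $V(t) = \int_{D_0} |\det J_t(\textbf{x})|\, d\textbf{x}$. Since $\phi_0 = \mathrm{id}$ forces $J_0 = I$ and $t \mapsto \det J_t(\textbf{x})$ is continuous, the determinant remains positive for $t$ near $0$, so the absolute value may be dropped in a neighbourhood of $t = 0$.

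Next I would differentiate this representation. Because the components of $H$ are continuously differentiable, the flow depends in a $C^1$ manner on its initial data, and $J_t(\textbf{x})$ satisfies the first variational equation $\dot{J}_t = DH(\phi_t(\textbf{x}))\, J_t$, where $DH$ is the Jacobian of the vector field $H$. Jacobi's formula for the derivative of a determinant then yields $\frac{d}{dt}\det J_t = \det J_t \cdot \operatorname{tr}\!\big(J_t^{-1}\dot{J}_t\big) = \det J_t \cdot \operatorname{tr}\!\big(DH(\phi_t(\textbf{x}))\big)$. Differentiation under the integral sign is legitimate on the compact closure $\overline{D_0}$ because the integrand and its $t$-derivative are jointly continuous in $(t,\textbf{x})$, so $\frac{dV}{dt} = \int_{D_0} \det J_t \,\operatorname{tr}\!\big(DH(\phi_t(\textbf{x}))\big)\, d\textbf{x}$.

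Finally I would evaluate at $t = 0$. There $J_0 = I$ and $\phi_0(\textbf{x}) = \textbf{x}$, so the integrand collapses to $\operatorname{tr}\big(DH(\textbf{x})\big)$, which is exactly the divergence $\nabla \cdot H(\textbf{x})$; this yields $\left.\frac{dV}{dt}\right|_{t=0} = \int_{D_0} \nabla\cdot H \, d\textbf{x}$, the asserted formula. The dissipation criterion in the statement is then an immediate reading of this identity: the instantaneous rate of volume change is governed by the integrated divergence, so whenever the time-$t$ map contracts volume for every $t > 0$ the flow is volume-decreasing, which is precisely what it means for the system to be \emph{dissipative}. The main obstacle is not the algebra but the analytic underpinning of the two differentiation steps, namely establishing $C^1$ dependence of the flow on initial conditions (hence the variational equation and Jacobi's formula) and validating differentiation under the integral via uniform continuity on $\overline{D_0}$; once these are secured, the evaluation at $t = 0$ is immediate.
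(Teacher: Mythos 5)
Your proof of the volume-rate formula is correct, and it takes a genuinely different (and analytically stronger) route than the paper. Both arguments begin identically, pulling the integral back to the reference domain via the change of variables $V(t)=\int_{D_0}|\det J_t(\textbf{x})|\,d\textbf{x}$. From there the paper proceeds purely infinitesimally: it Taylor-expands the flow as $\phi_t(\textbf{x})=\textbf{x}+H(\textbf{x})t+O(t^2)$, hence $J_t=I+\frac{\partial H}{\partial \textbf{x}}t+O(t^2)$, and uses the first-order determinant expansion $\det(I+At)=1+\operatorname{tr}(A)\,t+O(t^2)$ to read off $V(t)=V_0+\int_{D_0}\big((\nabla\cdot H)\,t+O(t^2)\big)d\textbf{x}$ and differentiate at $t=0$. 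You instead invoke the first variational equation $\dot{J}_t=DH(\phi_t(\textbf{x}))\,J_t$ together with Jacobi's formula, obtaining $\frac{d}{dt}\det J_t=\det J_t\cdot\operatorname{tr}\big(DH(\phi_t(\textbf{x}))\big)$ for \emph{all} $t$, not merely at $t=0$; your version therefore delivers the full Liouville identity $\frac{dV}{dt}=\int_{D_0}\det J_t\,\operatorname{tr}\big(DH(\phi_t(\textbf{x}))\big)\,d\textbf{x}$ at every time, with the stated formula as the special case $t=0$. You also secure points the paper glosses over: dropping the absolute value near $t=0$ via $J_0=I$ and continuity of the determinant, and justifying differentiation under the integral sign on $\overline{D_0}$. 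The trade-off is that the paper's computation is more elementary (no variational equation or Jacobi's formula needed, only a Taylor expansion), while yours is more general and makes the regularity hypotheses explicit. One point of scope: under this theorem the paper does not stop at the abstract formula but applies it to the model, computing $\nabla\cdot H=-q-x_2+(2r+1+px_1-2px_3)x_1$ and extracting the explicit sufficient condition $(2r+1+px_1-2px_3)x_1<q+x_2$ for dissipativity of system (1); your reading of the dissipation criterion is correct but stays abstract, so to match the paper's proof in full you would append this one-line divergence computation for the GLV vector field.
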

\begin{proof}  Dissipation in any dynamical system manifests itself as contraction of the phase volume on average. To check this, we express the volume $V(t)$ in the following form using the definition of the Jacobian of  transformation as
\begin{equation}
V(t)= \int_{D_{0}}|\frac{d\phi_t{(\textbf{x})}}{d{\textbf{x}}}| d{\textbf{x}}. 
\end{equation}
Expanding $\phi_t{(\textbf{x})}$ in the neighbourhood of ${t = 0}$. Since the vector field $H(\textbf{x})$ is smooth enough to have a tangent plane in each point on ${R}^{3}$ so we can expand  ${\phi}_t{(\textbf{x})}$ by Taylor series expansion. Hence we get,
\begin{equation}
\phi_{t}{(\textbf{x})} = \textbf{x} + \dot{\textbf{x}}{t} +O(t^2)~ for~{t}~\rightarrow {0} 
\end{equation}
Since 
\begin{equation}
{\dot{\textbf{x}}(t)= H(\textbf{x})},
 \end{equation}
 The equation (15) gives,
\begin{equation}
{\phi}_{t}{(\textbf{x})} = \textbf{x} + H(\textbf{x}) t +O(t^2) ~for~{t}~\rightarrow {0}.
\end{equation}
 It follows that
 $$ \frac{\partial{\phi}}{\partial{\textbf{x}}} = I +\frac{\partial{H}}{\partial{\textbf{x}}}t + O(t^2),$$
 \begin{equation}
|\frac{\partial{\phi}}{\partial{\textbf{x}}}| = |I +\frac{\partial{H}}{\partial{\textbf{x}}}t| +O(t^2).
\end{equation}
 Here $I$ is ${3 \times 3 }$ identity matrix so $\det{I}$ will be equal to ${1}$. By expanding the expression $(17)$ by using expansion of determinant, we get the following 
\begin{equation}
 |\frac{\partial{\phi}}{\partial{\textbf{x}}}| = 1+trace(\frac{\partial{H}}{\partial{\textbf{x}}})t +O(t^{2}).
 \end{equation} 
 Note that
 \begin{equation} 
  trace(\frac{\partial{H}}{\partial{\textbf{x}}}) = {\nabla.{H}},
  \end{equation} 
 therefore, we have
 \begin{equation}
   V(t) = {V}_{0} + {\int}_{{D}_{0}} ((\nabla.{H})t + O(t^{2}))d{\textbf{x}}.
   \end{equation}
  It gives
  \begin{equation}
   |\frac{d{V}}{d{t}}|_{t=0} = \int_{D_{0}} {\nabla .{H}} d{\textbf{x}}, 
\end{equation}  
i.e. if the volume shrinks then divergence of vector field will be strictly negative \cite{solari1996nonlinear}. 
\\
Now considering the equations of model $(1)$ in vector notation and computing its Jacobian
\begin{equation}
J({x}_{1},{x}_{2},{x}_{3})=\frac{\partial{H}}{\partial{\textbf{x}}}.
\end{equation}
The Jacobian  $J({x}_{1},{x}_{2},{x}_{3})$ of the model is given by
\begin{equation}
\begin{bmatrix}
1-{x}_{2}+2{r}{{x}_{1}}-2{p}{{x}_{1}}{{x}_{3}}&{-{x}_{1}}&{-p}{{x}_{1}}^{2}\\
{{x}_{2}}&{-1+{x}_{1}}&{0}\\
2{p}{{x}_{1}}{{x}_{3}}&{0}&-q+{p}{x}_{1}^{2}\\
\end{bmatrix},
\end{equation}
we take the parameter values as
\begin{equation}
 p= 2.9851, \quad  q= 3, \quad r=2.
 \end{equation}
The above argument shows that the G.L.V dynamical system will be dissipative if the generalized divergence should be less than zero, i.e.
\begin{equation}
  {\sum}_{i}\frac{\partial {H}_{i}}{\partial {x}_{i}} < 0.
   \end{equation}
 where Einstein summation has been used. The divergence of vector field ${H}$ on ${R}^{3}$ is as follows
 \begin{equation}
 \begin{array}{ll}
 {\nabla}.{H} = \frac{\partial {H}_{1}}{\partial {x}_{1}}+\frac{\partial {H}_{2}}{\partial {x}_{2}} + \frac{\partial {H}_{3}}{\partial {x}_{3}},\\
 {\nabla}.{H} = -{q} - {x}_{2}+(2 {r}+{1}+{p}{x}_{1}-{2}{p}{x}_{3}){x}_{1}.
 \end{array}
 \end{equation}
Hence system $(1)$ will be dissipative if the following condition is satisfied, 
\begin{equation}
(2{r}+{1}+{p}{x}_{1}-2{p}{x}_{3}){x}_{1}<{q}+{x}_{2}.
\end{equation} 
\end{proof}

\subsection{\label{sec:leve25}Existence and uniqueness of solution}
Since we are dealing with a population dynamics model, hence, the existence of at least one solution is must. However, the uniqueness of existed solution will give more appropriate results. Here we mention two theorems for which solutions of the system $(1)$ uniquely exist for all ${t}>{0}$ (complete detail of the proof can be seen in \cite{elsadany2018dynamical}).
\begin{theorem}
 If the functions ${f}_{1},~{f}_{2}$ and ${f}_{3}$ satisfy assumptions $(1)$ and $(2)$, mentioned in section \label{sec:level1}, then continuity of functions ${f}_{i}$ for $i \in \{1,2,3\}$ assures that  atleast one solution exists for the dynamics of system $(1)$ in region ${D} \times{I}$ where ${I} = (0,T]$ and the spatial boundary of  region $D \subset {R}^{3}$ is  defined as
$$D = \{\textbf{x}=({x}_{1},{x}_{2},{x}_{3}):~max{|{x}_{i}|} \leq {M},~for ~{i}\in\{ 1, 2,3\}\}$$
 where ${M}>{0}$. 
 \end{theorem}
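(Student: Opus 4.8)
The plan is to recast the initial value problem as a fixed-point problem for an integral operator on a space of continuous functions and then invoke a compactness-based existence principle. Writing the right-hand side of system $(1)$ in vector form as $F(\mathbf{x}) = \big(x_1(r_1+f_1(\mathbf{x})),\, x_2(r_2+f_2(\mathbf{x})),\, x_3(r_3+f_3(\mathbf{x}))\big)^{T}$ and setting $\mathbf{x}_0 := \mathbf{x}(0)$, a continuous function $\mathbf{x}:\overline{I}\to D$ with $\overline{I}=[0,T]$ solves the system with the prescribed initial data if and only if it satisfies the integral equation
$$\mathbf{x}(t)=\mathbf{x}_0+\int_0^t F(\mathbf{x}(s))\,ds.$$
First I would verify that $F$ inherits the good properties of the $f_i$ on the compact box $D$: since each $f_i$ is bounded there by assumption $(i)$ and $|x_i|\le M$, the vector field is bounded, say $\|F(\mathbf{x})\|\le N$ for all $\mathbf{x}\in D$, and the continuity of the $f_i$ together with the polynomial prefactors $x_i$ makes $F$ continuous on $D$.

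Next I would introduce the operator $\Phi$ defined by $(\Phi\mathbf{y})(t)=\mathbf{x}_0+\int_0^t F(\mathbf{y}(s))\,ds$ and the closed, bounded, convex subset
$$\mathcal{K}=\{\mathbf{y}\in C(\overline{I},D):\ \mathbf{y}(0)=\mathbf{x}_0,\ \|\mathbf{y}(t)-\mathbf{y}(s)\|\le N|t-s|\ \text{for all }s,t\in\overline{I}\}$$
of the Banach space $C(\overline{I},\mathbb{R}^3)$. Choosing $T$ small enough that the tube of radius $NT$ around $\mathbf{x}_0$ stays inside $D$ guarantees that $F(\mathbf{y}(s))$ is defined and bounded by $N$ along every $\mathbf{y}\in\mathcal{K}$, so that $\Phi$ maps $\mathcal{K}$ into itself and every image $\Phi\mathbf{y}$ is Lipschitz with constant $N$. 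The family $\Phi(\mathcal{K})$ is then uniformly bounded and equicontinuous, hence relatively compact by the Arzel\`a--Ascoli theorem, and $\Phi$ is continuous because $F$ is continuous. Schauder's fixed-point theorem then yields a fixed point $\mathbf{x}\in\mathcal{K}$, which is the sought solution on $\overline{I}$; restricting to $I=(0,T]$ gives existence of at least one solution in $D\times I$. Because the Lipschitz condition $(ii)$ is also available, one could alternatively make $\Phi$ a contraction for small $T$ and conclude by the Banach fixed-point theorem, but uniqueness (the content of the next theorem) is where that hypothesis is really needed, so here continuity alone suffices.

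The step I expect to be the main obstacle is the bookkeeping that keeps the approximating curves inside the box $D$: the bound $\|F\|\le N$ is only valid on $D$, so I must shrink the time horizon (so that $NT\le M-\max_i|x_{i,0}|$ in each coordinate) to ensure that $\mathbf{y}(s)$ never leaves $D$ and the estimate stays in force. Verifying the equicontinuity of $\Phi(\mathcal{K})$ and the continuity of $\Phi$ both reduce to the uniform bound on $F$ together with the passage to the limit under the integral, which is exactly where the continuity of the $f_i$ is used in an essential way. The remaining estimates---establishing $N$ explicitly in terms of $M$, the $r_i$, and the bounds on the $f_i$, and confirming convexity and closedness of $\mathcal{K}$---are routine.
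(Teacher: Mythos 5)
You should know at the outset that the paper contains no proof of this statement to compare against: it is stated in Section 3.5 with the remark that ``complete detail of the proof can be seen in \cite{elsadany2018dynamical}'', so the theorem is quoted, not proved, in this paper. Judged on its own merits, your argument is correct and complete in outline. It is the classical Cauchy--Peano existence proof implemented via Schauder's fixed point theorem: the equivalence of the initial value problem with the integral equation, the bound $\|F\|\le N$ on the compact box $D$ (from assumption (i) and $|x_i|\le M$), the closed convex set $\mathcal{K}$ of $N$-Lipschitz curves starting at $\mathbf{x}_0$, relative compactness of $\Phi(\mathcal{K})$ by Arzel\`a--Ascoli, and continuity of $\Phi$ from uniform continuity of $F$ on $D$ are all sound, and Schauder then produces the required solution. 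This is also exactly the framework the paper implicitly leans on: its companion uniqueness theorem, stated immediately after this one, works with the same integral operator on $C([0,T],D)$ and the contraction constant $K = T\max(1+2M+2rM+4pM^2,\,1+2M,\,q+2pM^2)$, so the intended division of labor is existence-by-compactness and uniqueness-by-contraction, which is precisely the division you articulate in your closing remark about Banach versus Schauder. Two points should be made explicit if this were written out in full: first, your tube condition $NT\le M-\max_i|x_{i,0}|$ silently assumes the initial datum lies in the interior of $D$, and it means the $T$ of the conclusion is a (possibly shrunk) local existence time rather than an arbitrarily prescribed one --- consistent with the theorem, which leaves $T$ unquantified, but worth stating; second, since $D$ is a max-norm box, the bound $N$ and the tube estimate should be taken coordinatewise (or in the max-norm throughout) so that $\Phi(\mathcal{K})$ genuinely lands back in $D$. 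Both are routine repairs, not gaps in the idea.
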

\begin{theorem}
Let  $D$ be a closed subspace of complete normed linear space ${R}^{3}$. Consider $H : D \subset {R}^{3} \rightarrow D $ is Lipschitz continuous so that  there exist ${0}<{K}<{1}$ such that  
 $$||H(\chi)- H(\psi)|| < K||{\chi}- {\psi}||$$
 with $$ K = {T}.max (1+2{M}+2{r}{M}+4{p}{M}^{2}, 1+2{M}, q+2{p}{M}^{2})$$
For ${0}<{K}<{1}$, $\textbf{H}(t)$ will be a contraction map. With the help of Banach fixed point theorem, it can be ensured that ${0}<{K}<{1}$ is sufficient condition for uniqueness of  solution of the system $(1)$. 
\end{theorem}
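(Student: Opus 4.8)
The plan is to recast the initial value problem for system $(1)$ as a fixed-point equation for an integral (Picard) operator and then invoke the Banach contraction principle. Writing the system in the vector form $\dot{\textbf{x}}(t) = H(\textbf{x}(t))$ with $\textbf{x}(0) = \textbf{x}_{0}$, any continuously differentiable solution on $I = (0,T]$ satisfies the equivalent integral equation $\textbf{x}(t) = \textbf{x}_{0} + \int_{0}^{t} H(\textbf{x}(s))\,ds$. I would therefore introduce the operator $\mathcal{T}$ on the space $C(I,D)$ of continuous maps from $I$ into $D$, equipped with the uniform (supremum) metric, by $(\mathcal{T}\textbf{x})(t) = \textbf{x}_{0} + \int_{0}^{t} H(\textbf{x}(s))\,ds$. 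Since $D$ is a closed subset of the complete space ${R}^{3}$, the space $C(I,D)$ under the sup-metric is a complete metric space, so Banach's theorem applies once $\mathcal{T}$ is shown to be a contraction self-map; because a fixed point of $\mathcal{T}$ is exactly a solution of $(1)$, uniqueness of the fixed point yields uniqueness of the solution.

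First I would verify that $\mathcal{T}$ maps $C(I,D)$ into itself. This is where the boundedness hypothesis (i) and the a priori bound $\max_{i}|x_{i}| \le M$ furnished by the preceding existence theorem are used: since $H$ is bounded on $D$, the integral $\int_{0}^{t} H(\textbf{x}(s))\,ds$ remains within the ball of radius $M$ provided $T$ is chosen small enough, so that $\mathcal{T}\textbf{x} \in C(I,D)$.

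The central estimate comes next. For $\chi,\psi \in C(I,D)$ the Lipschitz hypothesis (ii) gives $\|(\mathcal{T}\chi)(t) - (\mathcal{T}\psi)(t)\| \le \int_{0}^{t} \|H(\chi(s)) - H(\psi(s))\|\,ds \le L \int_{0}^{t} \|\chi(s) - \psi(s)\|\,ds \le LT\,\|\chi - \psi\|_{\infty}$, where $L$ is the Lipschitz constant of $H$ on $D$; taking the supremum over $t \in I$ gives $\|\mathcal{T}\chi - \mathcal{T}\psi\|_{\infty} \le K\,\|\chi - \psi\|_{\infty}$ with $K = LT$, and the time integration is exactly what produces the factor $T$ in the stated constant. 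To make $L$ explicit I would invoke the mean value inequality $\|H(\chi)-H(\psi)\| \le \sup_{D}\|DH\|\,\|\chi-\psi\|$ and bound the entries of the Jacobian $(24)$ on $D$, where each partial derivative is dominated by a polynomial in $M$; summing the absolute values of the entries row by row yields the three competing bounds $1+2M+2rM+4pM^{2}$, $1+2M$, and $q+2pM^{2}$, whose maximum reproduces the $K = T\cdot\max(\ldots)$ appearing in the theorem. Under the hypothesis $0<K<1$ the operator $\mathcal{T}$ is then a contraction on the complete metric space $C(I,D)$, so the Banach fixed point theorem furnishes a unique fixed point, i.e. a unique solution of $(1)$ on $I$.

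The step I expect to be the main obstacle is the Jacobian estimate: one must fix the correct operator norm so that the row (or column) sums of $|DH|$ reproduce the precise constants claimed, and one must confirm that $\max_{i}|x_{i}| \le M$ is genuinely preserved by $\mathcal{T}$, so the contraction acts on an invariant complete set rather than on all of $C(I,D)$. A secondary subtlety is that $K<1$ forces $T$ to be small, making the conclusion local in time; to upgrade uniqueness to all $t>0$ I would reapply the argument on successive subintervals, using the already established invariance of ${R}^{3}_{+}$ together with the uniform bound $M$ to keep $L$, and hence $K$, controlled across steps.
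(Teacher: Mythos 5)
The paper contains no proof of this theorem for you to be compared against: it states the result and defers entirely to the cited reference \cite{elsadany2018dynamical}, so the only meaningful check is whether your reconstruction is sound and matches the intended route. It is, and it does: the Picard integral operator on $C(I,D)$ with the sup metric, completeness from closedness of $D$, the self-mapping check for small $T$, and the Banach fixed point theorem are exactly the argument the statement itself presupposes. The one obstacle you flagged---recovering the precise constant $K$---resolves if you take the $\ell_{1}$ norm on ${R}^{3}$ rather than the max norm: the induced matrix norm is then the maximum \emph{column} sum of $|DH|$, not the row sum. Bounding the Jacobian entries on $D=\{\max_{i}|x_{i}|\leq M\}$, the first column gives $(1+M+2rM+2pM^{2})+M+2pM^{2}=1+2M+2rM+4pM^{2}$, the second gives $M+(1+M)=1+2M$, and the third gives $pM^{2}+(q+pM^{2})=q+2pM^{2}$, reproducing the stated $K=T\cdot\max(1+2M+2rM+4pM^{2},\,1+2M,\,q+2pM^{2})$ exactly; with your row-sum (max-norm) convention one gets $1+2M+2rM+3pM^{2}$ and $q+3pM^{2}$, which do not match. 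Since $D$ is a convex box, the mean value inequality along segments is legitimate there, and your remaining refinements---verifying that $\mathcal{T}$ preserves the bound $M$ for small $T$, and iterating the argument over successive subintervals to pass from local to global uniqueness---are the standard and correct way to finish.
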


\subsection{\label{sec3.6} Stability of feasible equilibrium points}
The equilibrium points of system $(1)$ are solutions of following algebraic equations
  \begin{equation}
\begin{split}
   {x}{x}_{1}(1-{x}_{2}+{r}{x}_{1}-{p}{x}_{3}{x}_{1}) = 0,\\
    {x}_{2}(-1 +{x}_{1}) = 0,\\ 
     {x}_{3}(-q+{p}{x}_{1}^{2}) = 0.
     \end{split}
\end{equation}
We obtain five equilibrium points by solving the system (27), 
\begin{equation} 
\begin{array}{ll}
   {X}_{0}^{*} = \begin{bmatrix} 
              0 \\
              0 \\
              0 
    \end{bmatrix}, \quad {X}_{1}^{*} = \begin{bmatrix} 
              1 \\ 
              1+r \\
              0 
    \end{bmatrix}, {X}_{2}^{*} = \begin{bmatrix} 
              \sqrt{\frac{q}{p}} \\
              0\\
             \frac{1+r \sqrt{\frac{q}{p}}}{\sqrt{{p}{q}}} 
    \end{bmatrix}, 
     {X}_{3}^{*} = \begin{bmatrix} 
              -\frac{1}{r} \\
              0 \\
              0 \\
    \end{bmatrix}, {X}_{4}^{*} = \begin{bmatrix} 
              - \sqrt{\frac{q}{p}} \\
              0 \\
               \frac{ -1+r \sqrt{\frac{q}{p}}}{\sqrt{{p}{q}}} \\
    \end{bmatrix}.
    \end{array}
    \end{equation}

From an ecological point of view, negative population density is not realistic as the population can not be negative, therefore, we take the vector ${\textbf{x}} = ({x}_{1},{x}_{2},{x}_{3})$ as an element of ${R}_{+}^{3}$.  ${R}_{+}^{3}$ is defined as
\begin{equation}
{R}_{+}^{3} = \{ {X} \in {R}^{3}:{x}_{i} \geq 0 ~ for ~ {i}\in \{1,2,3\}\}.
\end{equation}
Since equilibrium points  ${X}_{0}^{*},~ {X}_{1}^{*}$ and ${X}_{2}^{*}$ are elements of the set  $Int(R_{+}^{3})$, therefore, we study the local stability of ecologically feasible equilibrium points ${X}_{0}^{*},~ {X}_{1}^{*}$ and ${X}_{2}^{*}$.
 \begin{enumerate}[(I)]
 \item  Stability of Trivial Equilibrium Point ${X}_{0}^{*}$.
  
\begin{theorem} Consider the dynamics of the model (1) in the following  form
\begin{equation}
\dot{\textbf{x}} =H(\textbf{x})= A {\textbf{x}} +f({\textbf{x}})~ for~{\textbf{x}}\in {R}^{3}.
 \end{equation} 
If following three conditions are satisfied 
\begin{enumerate}[(i)]
\item Constant matrix ${A_{3 \times 3}}$ has ${3}$ Eigen-values with non-zero real part,
\item   $f({\textbf{x}})$ is smooth and 
\item  $\lim_{||\textbf{x}|| \to 0 }\frac{||f(\textbf{x})||}{||\textbf{x}||} = 0$,
\end{enumerate} 
then in a neighbourhood of the critical point ${X}_{0}^{*} = (0,0,0)$, there exists stable and unstable manifolds $W_{s}$ and $W_{u}$ with the same dimensions ${n}_{s}$ and ${n}_{u}$ as the stable and unstable manifolds ${E}_{s}$ and ${E}_{u}$ of the system
                  $$ \dot{{Z}}(t) = A{Z}.$$
In $\textbf{x} = \textbf{0}$, ${E}_{s}$ and ${E}_{u}$ are tangent to ${W}_{s}$ and ${W}_{u}$\cite{simmons2016differential}.
\end{theorem}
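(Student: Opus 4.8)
The plan is to recognize this statement as the local stable manifold theorem specialized to model (1) at the trivial equilibrium $X_{0}^{*}$, so that the real work reduces to exhibiting the splitting $H(\textbf{x}) = A\textbf{x}+f(\textbf{x})$ explicitly and certifying the three hypotheses; the existence and tangency of $W_{s}$ and $W_{u}$ then follow from the theorem cited in the statement \cite{simmons2016differential}. First I would linearize model (1) at $X_{0}^{*}=(0,0,0)$. Reading off the linear terms $x_{1}$, $-x_{2}$ and $-q x_{3}$ from the three equations, the constant matrix is the diagonal Jacobian $A=\mathrm{diag}(1,-1,-q)$, and the remainder $f(\textbf{x})=H(\textbf{x})-A\textbf{x}$ collects the higher-order monomials: $-x_{1}x_{2}+r x_{1}^{2}-p x_{1}^{2}x_{3}$ in the first component, $x_{1}x_{2}$ in the second, and $p x_{1}^{2}x_{3}$ in the third.

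Next I would verify the three conditions in turn. For (i), the eigenvalues of $A$ are $\{1,-1,-q\}$; since $q>0$ none lies on the imaginary axis, so $X_{0}^{*}$ is hyperbolic, with unstable subspace $E_{u}=\mathrm{span}\{e_{1}\}$ of dimension $n_{u}=1$ and stable subspace $E_{s}=\mathrm{span}\{e_{2},e_{3}\}$ of dimension $n_{s}=2$. For (ii), $f$ is a polynomial map, hence $C^{\infty}$, so it is certainly smooth. For (iii), every monomial appearing in $f$ has total degree at least two, so there is a constant $C$ with $\|f(\textbf{x})\|\le C\|\textbf{x}\|^{2}$ on a neighbourhood of the origin, whence $\|f(\textbf{x})\|/\|\textbf{x}\|\le C\|\textbf{x}\|\to 0$ as $\|\textbf{x}\|\to 0$.

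With all three hypotheses certified, I would invoke the stable manifold theorem to obtain local invariant manifolds $W_{s}$ and $W_{u}$ through $X_{0}^{*}$ whose dimensions equal $n_{s}=2$ and $n_{u}=1$ and which are tangent at $\textbf{x}=\textbf{0}$ to $E_{s}$ and $E_{u}$ respectively. The only genuine difficulty lies in the construction of these manifolds — the Lyapunov--Perron or graph-transform contraction argument that produces the invariant graphs over $E_{s}$ and $E_{u}$ and establishes the tangency — but since that construction is precisely the content of the theorem being cited, the task in this proof is confined to certifying the hyperbolicity of $A$ and the sublinear smallness of the nonlinearity $f$, both of which are immediate for the polynomial vector field of model (1).
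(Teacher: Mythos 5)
Your proposal is correct and mirrors the paper's own proof: both split $H(\textbf{x}) = A\textbf{x} + f(\textbf{x})$ with $A = \mathrm{diag}(1,-1,-q)$, verify hypotheses (i)--(iii) for this explicit decomposition, and then invoke the cited theorem to obtain $W_{s}$, $W_{u}$ tangent to $E_{s}$, $E_{u}$ and conclude the origin is a saddle. The only difference is in one verification step: the paper checks condition (iii) by passing to spherical coordinates, whereas your degree-counting estimate $\|f(\textbf{x})\|\le C\|\textbf{x}\|^{2}$ near the origin reaches the same limit more directly and explicitly.
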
 \label{thm}
\begin{proof}
For GLV system, ${X}_{0}^{*}={(0,0,0)}$ is trivial equilibrium point. Here we check all three mentioned conditions of theorem \ref{thm}.\\
\begin{enumerate}[(i)]
\item  Note that for model (1), the constant matrix $A_{{3}\times{3}}$ is
$$A=\begin{pmatrix}
                   1&0&0\\
                   0&-1&0\\
                   0&0&-q\\
    \end{pmatrix}$$\\
 The determinant of the matrix ${A}$ is non zero if ${q} \neq {0}$. Since, we have taken ${q}={3}$, therefore, all Eigen values of ${A}$ have non zero real part.
\item  Since functions $f_{1}(x_{1},x_{2},x_{3})$, $f_{2}(x_{1},x_{2},x_{3})$ and $ f_{3}(x_1,x_2,x_3)$ for model (1) are considered as
      \[ 
\begin{array}{ll}
     {x}_{1}(-{x}_{2}+{r}{x}_{1}-{p}{x}_{3}{x}_{1}) &= {f}_{1}({x}_{1},{x}_{2},{x}_{3}),\\
    {x}_{2}{x}_{1} & = {f}_{2}({x}_{1},{x}_{2},{x}_{3}), \\ 
     {x}_{3}({p}{x}_{1}^{2}) &     = {f}_{3}({x}_{1},{x}_{2},{x}_{3}).\\
\end{array}    
 \]
All three functions are continuous and have continuous partial derivative for all $\textbf{x}\in R^{3}$ which implies that $f(\textbf{x})$ is smooth on ${R}^{3}$. Hence, the second condition also holds.
\item For any ${\textbf{x}} \in {R}^{3}$, converting the Cartesian coordinates into spherical coordinates by making the following transformation
$$\begin{bmatrix} 
              {x}_{1} = {r} \sin{\theta} \cos{\phi}\\
              {x}_{2} = {r} \sin{\theta} \sin{\phi}\\
              {x}_{3} = {r} \cos{\theta} ~~~~~~~~~~~\\ 
    \end{bmatrix}, $$
where ${r}\geq {0}, {0} \leq {\theta} \leq {\pi}$ and ${0} \leq {\phi} \leq {\pi}.$\\
Using this transformation in model (1), we have   
$$\lim_{||{\textbf{x}}|| \to {0} }\frac{||f({\textbf{x}})||}{||\textbf{x}||} = 0.$$
Hence, the critical point $(0,0,0)$ of system $(1)$ is of the same type of critical point of the system 
                  $$ \dot{Z}(t) = A{Z}.$$
The Eigen values of ${A}$ are ${1},{-1}$ and $-{q}$ which implies that $(0,0,0)$ is saddle node for the  system $ \dot{Z}(t) = A{Z}$. Therefore, the trivial steady state $X_{0}^{*} = (0,0,0)$ of the model $(1)$ is a saddle point.
\end{enumerate}
 \end{proof}  
\item Stability of Axial Equilibrium Point ${X}_{1}^{*}$.
\\
 The Jacobian matrix of model (1) for parameter values $ p= 2.9851,~q= 3,~r=2$ is given as
 \begin{equation}
J=\begin{bmatrix}
1-{x}_2+4{x}_{1}-5.9702{x}_{1}x_{3} & -{x}_{1}  & -2.9851{x}_{1}^{2}\\
                               {x}_{2}        &-1+{x}_{1} &  0\\
          5.9702{x}_{1}{x}_{3}&    0       &-3+2.9851{x}_{1}^{2}\\
    \end{bmatrix}.
    \end{equation}
Jacobian matrix (31) of the model (1) about ${X}_{1}^{*}= (1,3,0)$ yields the following Jacobian matrix
\begin{equation}
J_{{X}_{1}^{*}}=\begin{bmatrix}
                {2} & {-1}  & {-2.9851}\\
                {3} &  {0}  &  {0}\\
                {0} & {0}  &   {-.0149}\\
    \end{bmatrix}.
    \end{equation}
 The characteristic equation $|J_{{X}_{1}^{*}}- \lambda I| = 0$ of  matrix $(32)$ is given as 
  \begin{equation}
      {\lambda}^{3} - (1.9851){\lambda}^{2} +(2.9702 ){\lambda} + 0.0447 =0.
  \end{equation} 
The characteristic equation $(33)$ has the following Eigen values
\begin{equation}
\lambda_{1}= -0.014900,~\lambda_{2}= 1+\sqrt{2}\iota,~\lambda_{3}= 1-\sqrt{2}\iota.
\end{equation}
 Since $\lambda_{2}$ and $\lambda_{3}$ have positive real parts, it implies that ${X}_{1}^{*} = (1,3,0)$ is unstable equilibrium point.
 \item Stability of Planer Equilibrium Point ${X}_{2}^{*}$.
\\
 The Jacobian matrix of model (1) for parameter values $ p= 2.9851,~q= 3,~r=2$ is given as
 \begin{equation}
 J=\begin{bmatrix}
1-{x}_2+4{x}_{1}-5.9702{x}_{1}x_{3} & -{x}_{1}  & -2.9851{x}_{1}^{2}\\
                               {x}_{2}        &-1+{x}_{1} &  0\\
          5.9702{x}_{1}{x}_{3}&    0       &-3+2.9851{x}_{1}^{2}\\
    \end{bmatrix}.
 \end{equation}
Jacobian matrix $(35)$ of the model (1) about ${X}_{2}^{*}= (1.002493,0,1.4159)$ yields the following Jacobian matrix
    \begin{equation}
    J_{{X}_{2}^{*}}=\begin{bmatrix}
                3.004986 & -1.002493  &  3.000002\\
                0        &  .002493  &  0      \\
                6.00887         &    0      &   .000002    \\
    \end{bmatrix}.
        \end{equation}
 The characteristic equation $|J_{{X}_{2}^{*}}- \lambda I| = 0$ of  matrix $(36)$ is given as
 \begin{equation}
    \lambda^{3} +(0.997507)\lambda^{2} +( 18.027423)\lambda - 0.044949 =0. 
 \end{equation}
 The characteristic equation $(37)$ has the following Eigen values
 \begin{equation}
 \lambda_{1}= -0.002493,~ \lambda_{2}= -0.5+4.216\iota,~ \lambda_{3}= -0.5-4.216\iota. 
  \end{equation}
 Since all three Eigen-values of matrix $(36)$ have negative real parts, it shows that ${X}_{2}^{*}= (1002493,0,1.4159)$ is locally stable equilibrium point. 
 \end{enumerate}
It is clear that planer equilibrium point is stable whereas trivial and axial equilibrium points are unstable equilibrium points. Since trivial equilibrium point refers the zero density of all three species, therefore, we neglect the instability of trivial equilibrium point. From an ecological point of view, we mainly focus on non-trivial unstable equilibrium point ${X}_{1}^{*}$ and try to stabilize it by adding some external control inputs. 
 \subsection{\label{sec3.7} Control of instability of  axial equilibrium point}
 In order to suppress instability to ${X}_{1}^{*} = (1,3,0)$, we consider the controlled GLV system in the following form
 \begin{equation}
\begin{array}{ll}
    \dot{{x}_{1}} &= {x}_{1}(1-{x}_{2}+{r}{x}_{1}-{p}{x}_{3}{x}_{1})+{u}_{1},\\
    \dot{{x}_{2}} &= {x}_{2}(-1 +{x}_{1})+{u}_{2}, \\ 
    \dot{{x}_{3}} &= {x}_{3}(-{q}+{p}{x}_{1}^{2})+{u}_{3}. \end{array} 
\end{equation}
 We introduce the external control law 
 \begin{equation}
\begin{array}{ll}
          {u}_{1} & = -{\mu}_{1}({x}_{1}-1),\\
   {u}_{2} &= -{\mu}_{2}({x}_{2}-3), \\ 
  {u}_{3} &= -{\mu}_{3}({x}_{3}-0). 
\end{array} 
\end{equation}
with ${x}_{1}$,~${x}_{2}$, ${x}_{3}$ as the feedback variable and $\mu_{1},~\mu_{2},~\mu_{3}$ as the positive feedback gains. We substitute control law $(40)$ into $(39)$ and hence, the controlled system $(39)$ takes the following form
 \begin{equation}
     \begin{array}{ll}
    \dot{{x}_{1}}= {x}_{1}(1-{x}_{2}+{r}{x}_{1}-{p}{x}_{3}{x}_{1}))-{\mu}_{1}({x}_{1}-1),\\
    \dot{{x}_{2}}= {x}_{2}(-1 +{x}_{1})-{\mu}_{2}({x}_{2}-3), \\ 
    \dot{{x}_{3}}= {x}_{3}(-{q}+{p}{x}_{1}^{2})-{\mu}_{3}({x}_{3}-0). 
\end{array}
\end{equation}
\begin{theorem}
 The equilibrium point ${X}_{1}^{*} = (1,3,0)$ of the model $(1)$ will be asymptotically stable if positive gains ${\mu}_{1}, {\mu}_{2}$ and ${\mu}_{3}$ satisfy the following inequalities\cite{yassen2006adaptive}
  \begin{equation}
     \begin{array}{ll}
  {\mu}_{1} & > 2,\\
  {\mu}_{1}{\mu}_{2} &> 1+2{\mu}_{2}, \\ 
    {\mu}_{1}{\mu}_{2}({\mu}_{3}+0.0149) &>{\mu}_{2}(2{\mu}_{3}+2.2528)+{\mu}_{3}+0.0149. 
\end{array}
\end{equation}
 \end{theorem}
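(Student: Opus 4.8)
The plan is to prove the statement by Lyapunov's indirect method: show that the Jacobian of the controlled system $(41)$ evaluated at $X_1^{*}=(1,3,0)$ is Hurwitz exactly when the three gain inequalities hold. First I would verify that $X_1^{*}$ is still an equilibrium of the controlled system, since each feedback term $-\mu_i(x_i-x_i^{*})$ vanishes there by construction; this is what makes linearization about $X_1^{*}$ legitimate and also forces the control not to shift the target state.

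Next I would compute the Jacobian $J_c$ of the right-hand side of $(41)$ at $(1,3,0)$. Starting from the Jacobian $(31)$ of the uncontrolled field and adding the diagonal contributions $-\mu_1,-\mu_2,-\mu_3$ coming from the feedback, with $p=2.9851,\ q=3,\ r=2$, I expect
$$J_c=\begin{bmatrix} 2-\mu_1 & -1 & -2.9851\\ 3 & -\mu_2 & 0\\ 0 & 0 & -0.0149-\mu_3\end{bmatrix},$$
where the constant $-0.0149=-q+p=-3+2.9851$ is precisely the number appearing in the theorem. I would then form the characteristic polynomial $\det(\lambda I-J_c)=\lambda^{3}+A_1\lambda^{2}+A_2\lambda+A_3$ and read off $A_1,A_2,A_3$ as polynomials in $\mu_1,\mu_2,\mu_3$. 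Because the third row of $J_c$ has only its diagonal entry nonzero, the cubic factors as $(\lambda+0.0149+\mu_3)$ times the characteristic polynomial of the leading $2\times 2$ block $\begin{bmatrix} 2-\mu_1 & -1\\ 3 & -\mu_2\end{bmatrix}$; I would use this factorization as an independent check, since the block is Hurwitz iff its trace $2-\mu_1-\mu_2$ is negative and its determinant $\mu_2(\mu_1-2)+3$ is positive, while the split-off root $-(0.0149+\mu_3)$ is automatically negative for $\mu_3>0$.

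Applying the Routh--Hurwitz criterion for a cubic, asymptotic stability is equivalent to $A_1>0$, $A_3>0$, and $A_1A_2>A_3$; substituting the coefficients and simplifying is where the three stated inequalities should emerge. The condition $\mu_1>2$ is a clean sufficient form of $A_1>0$, because $\mu_1>2$ forces $A_1=\mu_1+\mu_2+\mu_3-1.9851>0$ regardless of the other gains; the determinant/$A_3>0$ branch collapses to $\mu_1\mu_2>1+2\mu_2$; and the product condition $A_1A_2>A_3$ produces the third inequality carrying the decimal constants. The main obstacle is entirely computational rather than conceptual: carefully expanding the determinant, grouping the $\mu$-monomials, and matching the Routh--Hurwitz products against the exact constants $0.0149$ and $2.2528$, while being explicit that the stated inequalities are offered as \emph{sufficient} (and slightly conservative relative to the bare trace/determinant conditions revealed by the factorization). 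To keep round-off under control I would hold the constants symbolic as long as possible, writing $a:=\mu_3+0.0149$ and tracking the block trace and determinant, and substitute numerics only at the end. Once $J_c$ is shown Hurwitz under $(42)$, the linearization theorem invoked earlier in this section guarantees that $X_1^{*}$ is locally asymptotically stable for the controlled system, completing the proof.
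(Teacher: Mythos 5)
Your overall route (linearize the controlled system at $X_1^{*}$, factor the characteristic polynomial, apply Routh--Hurwitz) is genuinely different from the paper's, and it can be made to prove the statement---but the central step as you describe it would fail. The paper never looks at the spectrum of $J_c$: it passes to error coordinates $e=(x_1-1,\;x_2-3,\;x_3)$, writes the linearized error system $(45)$ (whose matrix is exactly your $J_c$), takes the quadratic Lyapunov function $L=\tfrac12(e_1^2+e_2^2+e_3^2)$, and computes $\dot L = e^{T}Me$ with $M$ the \emph{symmetric part} of $J_c$,
\[
M=\begin{bmatrix} 2-\mu_1 & 1 & -1.4925\\ 1 & -\mu_2 & 0\\ -1.4925 & 0 & -(0.0149+\mu_3)\end{bmatrix}.
\]
The three stated inequalities are precisely Sylvester's criterion for $M$ to be negative definite: $2-\mu_1<0$; the second leading minor $(2-\mu_1)(-\mu_2)-1>0$, i.e. $\mu_1\mu_2>1+2\mu_2$; and $\det M<0$, which is where the third inequality, decimal constants and all, comes from. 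In particular the constant $2.2528$ is (up to the paper's rounding) generated by $1.4925^{2}=(2.9851/2)^{2}$, i.e. by the off-diagonal entry $-2.9851$ \emph{after symmetrization}.

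That entry is exactly what your route cannot see. Since the third row of $J_c$ is $(0,0,-(0.0149+\mu_3))$, the $(1,3)$ entry $-2.9851$ drops out of the characteristic polynomial entirely, and, as you note, the cubic factors through the $2\times2$ block. Writing $a=\mu_3+0.0149$, $T=\mu_1+\mu_2-2$, $D=\mu_2(\mu_1-2)+3$, one gets $A_1=a+T$, $A_2=aT+D$, $A_3=aD$, and $A_1A_2-A_3=T(a^{2}+aT+D)$, which is automatically positive once $T>0$ and $D>0$. So Routh--Hurwitz reduces to $\mu_1+\mu_2>2$, $\mu_1\mu_2>2\mu_2-3$, $\mu_3>-0.0149$: the $A_3>0$ branch does \emph{not} collapse to $\mu_1\mu_2>1+2\mu_2$ (that is strictly stronger), and the product condition $A_1A_2>A_3$ produces no inequality carrying $0.0149$ and $2.2528$---no amount of careful bookkeeping will make the paper's third inequality emerge from the characteristic polynomial. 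The repair is the one you half-state: prove the weaker trace/determinant conditions make $J_c$ Hurwitz, check in one line that the theorem's first two inequalities (with $\mu_2,\mu_3>0$) imply them, and invoke the linearization theorem; the third inequality is then never used. This proves the statement (indeed a stronger one with weaker hypotheses), but it cannot explain where the stated conditions come from---they are intrinsic to the Lyapunov negative-definiteness argument, not to the spectrum---so you should present the matching step as an implication, not a derivation.
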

 \begin{proof}
 The Jacobian matrix $J$ of the system $(41)$  is given by
 \begin{equation}
 \begin{bmatrix}
1-{x}_{2}+2{x}_{1}(r-p{x}_{3})-{\mu}_{1}& -{x}_{1}  & -p{x}_{1}^{2}\\
                               {x}_{2}&-1+{x}_{1}-{\mu}_{2} &  0\\
          2p{x}_{1}x_{3}& 0 &-q+p{x}_{1}^{2}-{\mu}_{3}
          \end{bmatrix}
 \end{equation}
 
 Let us consider that 
 \begin{equation}
 \begin{array}{ll}
{e}_{1} &= ({x}_{1}-1),\\ 
{e}_{2} &= ({x}_{2}-3),\\
{e}_{3} &= ({x}_{3}-0).
 \end{array}
 \end{equation}
  From $(44)$, we get the error system as
 \begin{equation}
\begin{array}{ll}
    \dot{{e}_{1}}= (2-{\mu}_{1}){e}_{1}-{e}_{2}-2.9851{e}_{3},\\
    \dot{{e}_{2}}=  3{e}_{1}-{\mu}_{2}{e}_{2}, \\ 
    \dot{{e}_{3}}= -(0.014900+{\mu}_{3}){e}_{3}. \\
\end{array}
\end{equation}
The system $(1)$ with constant and known parameters, will be stabilized to steady state ${X}_{1}^{*} = (1,3,0)$, if error system $(45)$ stabilized to $(0,0,0)$. 
\\
To study the stability of equilibrium point $(0,0,0)$ of error system, we consider the Lyapunov function $L({e}_{1},{e}_{2},{e}_{3})$ as:
 \begin{equation}
    L = \frac{1}{2}({e}_{1}^{2} + {e}_{2}^{2} + {e}_{3}^{2}). 
 \end{equation}
 The time derivative of $L$ in the neighbourhood of $(0,0,0)$ is given as
 \begin{equation}
 \dot{L}= (2-{\mu}_{1}){e}_{1}^{2}+2{e}_{1}{e}_{2}-{\mu}_{2}{e}_{2}^{2}-2.9851{e}_{1}{e}_{3}-(0.0149+{\mu}_{2}){e}_{3}^{2}.
 \end{equation}
The time derivative of Lyapunov function can be re-written in the following form
\begin{equation}
\dot{L}= \textbf{e}^{T}M\textbf{e}.
\end{equation}
where $\textbf{e}=(({x}_{1}-1),({x}_{2}-3),({x}_{3}-0))$ is the error vector in ${R}^{3}$, $\textbf{e}^{T}$ is the transpose of error vector $\textbf{e}$ and the matrix $M$ is $3 \times 3$ is given as
 $$M = \begin{bmatrix}
2-{\mu}_{1}&1&-1.49250\\
     1&-{\mu}_{2}&0\\
 -1.49250& 0 &-(0.0149+{\mu}_{3})\\
    \end{bmatrix}$$
According to Lyapunov stability theory, the equilibrium point $(0,0,0)$ of system $(45)$ will be asymptotically stable if $\dot{L} < 0 $. And $\dot{L} < 0 $ if  matrix ${M}$ will be negative definite. Considering this, we find that mentioned condition will be fulfilled if positive feedback gains $\mu_{1}$, and $\mu_{2}$ satisfy  the following inequalities,
  \begin{equation}
  \begin{array}{ll}
    {\mu}_{1} & > 2,\\
    {\mu}_{1}{\mu}_{2} &> 1+2{\mu}_{2},\\
    {\mu}_{1}{\mu}_{2}({\mu}_{3}+0.0149)&>{\mu}_{2}(2{\mu}_{3}+2.2528)+{\mu}_{3}+0.0149.
\end{array}
 \end{equation} 
\end{proof}
\section{\label{sec4}Complete Replacement Synchronization}
To investigate complete replacement synchronization techniques, we consider  two identical chaotic GLV systems having the same parameter but different initial conditions. Since the coupling between models is needed to maintain the synchronous state, we couple the states of both models with two controllers and drive the response system with prey species ${x}_{1}$. For this, we remove prey from response system, and drive its counterpart. Here, we can think of prey species ${x}_{1}$ as a driving variable for response system with an assumption that it is superfluous in the system of two coupled GLV models. This construction gives us a new five-dimensional drive-response system having drive and response variables as $({x}_{1d}, {x}_{2d}, {x}_{3d})$ and $({x}_{2r}$, ${x}_{3r})$ respectively. The coupled chaotic system with ${x}_{1d}$ drive configuration is as follows:
\begin{equation}
\left \{
\begin{array}{ll}

 \textbf{(Drive System)}\\
   \dot{x}_{1d} & = {x}_{1d}(1-{x}_{2d}+{r}{x}_{1d}-{p}{x}_{3d}{x}_{1d}),\\
    \dot{x}_{2d} & = {x}_{2d}(-1+{x}_{1d}), \\ 
    \dot{x}_{3d} & = {x}_{3d}(-{q}+{p}{x}_{1d}^2),\\
    \\
\textbf{( Response System)} \\
    \dot{x}_{2r} & = {x}_{2d}(-1 +{x}_{1d})+ {u}_{1}, \\ 
    \dot{x}_{3r} & = {x}_{3r}(-q+{p} {x}_{1d}^{2})+{u}_{2}. 
\end{array} \right
\} 
\end{equation}
 $${x}_{2d}(0)\neq {x}_{2r}(0) ~ and ~ {x}_{3d}(0) \neq {x}_{3r}(0).$$
\subsection{\label{sec4.1}Active control law for stability of synchronization manifold}

\begin{theorem}
The identical synchronization manifold $\Omega = [{x}_{2d} = {x}_{2r},~{x}_{3d} = {x}_{3r} ]$ is globally asymptotically stable for the coupling between drive and response system in equation $(24)$ for positive  ${\mu}_{1}$ and ${\mu}_{2}$, where ${\mu}_{1}$ and ${\mu}_{2}$ are large enough such that 
$$  {\mu}_{1}+1>{x}_{1d}, \quad \quad  {\mu}_{2}+q>p {{x}_{1d}^2}.$$
\end{theorem}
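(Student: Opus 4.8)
The plan is to introduce synchronization error variables measuring the gap between the response and drive states, design the active control inputs $u_1,u_2$ so as to render the error dynamics linear and stable, and then certify convergence with a quadratic Lyapunov function, exactly as in the earlier chaos-control argument for the feedback-controlled system. Concretely, I would set $e_1 = x_{2r}-x_{2d}$ and $e_2 = x_{3r}-x_{3d}$, so that the synchronization manifold $\Omega$ is precisely the set $\{e_1=e_2=0\}$. Reading the response $x_2$-equation with the term $x_{2r}(-1+x_{1d})$ (the form consistent with complete replacement and with the stated gain condition) and subtracting the drive equations for $x_{2d},x_{3d}$ from the response equations for $x_{2r},x_{3r}$, both of which are driven by the common prey signal $x_{1d}(t)$, yields
\begin{equation*}
\dot{e}_1 = (-1+x_{1d})\,e_1 + u_1, \qquad \dot{e}_2 = (-q+p\,x_{1d}^2)\,e_2 + u_2.
\end{equation*}

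Next I would choose the active control law $u_1 = -\mu_1 e_1 = -\mu_1(x_{2r}-x_{2d})$ and $u_2 = -\mu_2 e_2 = -\mu_2(x_{3r}-x_{3d})$. Substituting these decouples the error system into two scalar time-varying linear equations,
\begin{equation*}
\dot{e}_1 = -(1+\mu_1-x_{1d})\,e_1, \qquad \dot{e}_2 = -(q+\mu_2-p\,x_{1d}^2)\,e_2.
\end{equation*}
I would then take the Lyapunov candidate $L = \tfrac12(e_1^2+e_2^2)$, which is positive definite and radially unbounded, and differentiate along the error flow to obtain
\begin{equation*}
\dot{L} = -(1+\mu_1-x_{1d})\,e_1^2 - (q+\mu_2-p\,x_{1d}^2)\,e_2^2.
\end{equation*}
Under the hypotheses $\mu_1+1>x_{1d}$ and $\mu_2+q>p\,x_{1d}^2$, both bracketed coefficients are strictly positive, so $\dot{L}$ is negative definite; by the Lyapunov stability theorem it follows that $(e_1,e_2)\to(0,0)$ and the manifold $\Omega$ is globally asymptotically stable.

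The main subtlety I would address is that the coefficients $1+\mu_1-x_{1d}(t)$ and $q+\mu_2-p\,x_{1d}^2(t)$ are not constants but depend on the chaotic driving signal $x_{1d}(t)$, so a purely pointwise sign condition does not by itself guarantee asymptotic stability of a time-varying system. To make the hypotheses meaningful for a single choice of gains, I would invoke the dissipativity established earlier, which confines the drive trajectory to a bounded attractor; hence $x_{1d}(t)$ and $x_{1d}^2(t)$ are uniformly bounded along the orbit, and $\mu_1,\mu_2$ can be fixed large enough that the two inequalities hold for all $t$. With a uniform lower bound $\alpha>0$ on the two coefficients one in fact obtains $\dot{L}\le -2\alpha L$, giving exponential and therefore global asymptotic convergence of the error to the origin. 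The only point requiring care is to state this boundedness input explicitly, since it is what upgrades the instantaneous negativity of $\dot{L}$ into genuine uniform asymptotic stability of the synchronized state.
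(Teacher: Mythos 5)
Your proposal follows essentially the same route as the paper's proof: the same error variables $e = x_{\cdot r}-x_{\cdot d}$ (you correctly read the response equation as $\dot{x}_{2r} = x_{2r}(-1+x_{1d})+u_1$, fixing the paper's typo in the coupled system), the same active controllers $u_i = -\mu_i e_i$, the same quadratic Lyapunov function $L = \tfrac12(e_1^2+e_2^2)$, and the same computation of $\dot{L}$ yielding the two gain inequalities. Your closing observation---that boundedness of the chaotic drive signal $x_{1d}(t)$ on its attractor is what lets a single choice of gains satisfy the inequalities uniformly in time, upgrading pointwise negativity of $\dot{L}$ to genuine (indeed exponential) asymptotic stability---is a rigor point the paper only gestures at, and strengthens rather than departs from its argument.
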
 
\begin{proof} We consider drive-response system given by equations $(50)$ and add the uni-directional controllers to the response system through the linear positive constants ${\mu}_{1}$ and ${\mu}_{2}$.  We choose two controllers for response system as
 \begin{equation}
\begin{array}{ll}
    {u}_{1 } &= -{\mu}_{1}({x}_{2r}(t)-{x}_{2d}(t)), \\ 
   {u}_{2} & = -{\mu}_{2}({x}_{3r}(t)-{x}_{3d}(t)).
  \end{array} 
\end{equation}

Existence of all forms of identical synchronization in any  dynamical system (chaotic or not), are really manifestations of dynamical behaviour restricted to a flat hyper-plane in the phase space i.e. to say motion is continually confined to a hyper-plane which can be referred as synchronization manifold \cite{pecora1998master}. Therefore, we consider the identical synchronization manifold of the systems equation $(50)$ as 
  $$\Omega = [{x}_{2d} = {x}_{2r},{x}_{3d} = {x}_{3r} ].$$ 
Further, we consider the errors between states of drive and response systems of system $(50)$ as
 \begin{equation}
\begin{array}{ll}
   {e}_{2}(t) &={x}_{2r}(t)-{x}_{2d}(t), \\ 
    {e}_{3}(t)& ={x}_{3r}(t)-{x}_{3d}(t).
  \end{array} 
\end{equation} 
 The dynamics of error system is as follows
  \begin{equation}
\begin{array}{ll}
    \dot{{e}_{2}} &= {(-1-{\mu}_{1} + {x}_{1d})}{e}_{2}, \\ 
    \dot{{e}_{3}} &= (-q-{\mu}_{2}+p {{x}_{1d}^2}){e}_{3}.
  \end{array} 
\end{equation} 
where, we are now interested in the stability of origin. The Jacobian of right side of $(53)$ is given by 
\begin{equation} 
J({e}_{2},{e}_{3})=\begin{bmatrix}
                -1+\mu_{1}+ {x}_{1d} & 0\\
                 0  & -q+\mu_{2}+p {x}_{1d}^2 \\
                \end{bmatrix}\
                \end{equation} 
We treat the response system $({x}_{2d}(t),{x}_{3r}(t))$ as a separate system driven by ${x}_{1d}$, then the solutions of equation $(54)$ convey us about convergence and divergence of two initially nearby trajectories of $\{{x}_{2r}(t),{x}_{2d}(t)\}$ and $\{{x}_{3r}(t),{x}_{3d}(t)\}$. 
 \\
Next, We analyse the possibility of synchronization using the Lyapunov function construction method. We consider the Lyapunov function as
\begin{equation}
L({e}_{2},{e}_{3}) = \frac{1}{2}({e}_{2}^{2} + {e}_{3}^{2}).
\end{equation}
\begin{equation}
\frac{dL}{dt} = {e_{2}}{\dot{e_{2}}}+ {e_{3}}{\dot{e_{3}}}.\end{equation}
 Plugging  dynamics of errors $(53)$ into $(55)$, we get 
 \begin{equation}
 \frac{d{L}}{d{t}} = -[(\mu_{1}+1-{x}_{1d}){e}_{2}^{2}+ (\mu_{2}+q-p {x}_{1d}^{2}){e}_{3}^{2}],
 \end{equation}
 which will be strictly negative for following conditions on ${\mu}_{1}$ and ${\mu}_{2}$
 \begin{equation} 
\begin{array}{ll}
   \mu_{1}+1> {x}_{1d}, \\ 
 \mu_{2} + q> p {{x}_{1d}^2}.
  \end{array} 
 \end{equation}  
for all ${t}>{0}$. Condition $(58)$ ensures that we consider the bounded density of prey species, then we can bound the positive feedback gains. Thus, if ${\mu}_{1}$ and ${\mu}_{2}$ satisfy $(58)$, then it can be assured that $\frac{d{L}}{d{t}}< {0}$ for all ${t}>{0}$ or in other words, the complete replacement synchronization follows as ${e}_{2}  ~ and ~{e}_{3} \rightarrow 0 $ as $t \rightarrow \infty$. 
\end{proof}

\subsubsection{Numerical Simulation}
Since, a suitable  coupling can influence both frequency as well as chaotic amplitude, therefore, the states coincide (or nearby coincide) and regime of synchronization sets in. Thus, it is pre-arranged that the chosen coupling should  assist the coupled states in coincidence  without perturbing their chaotic rhythm. We numerically integrate the System $(50)$ and display results in figure \ref{figa}, where drive and response systems are shown to synchronize when considered positive feed-back gain are chosen as ${\mu_{1}}=0.000024, {\mu_{2}}=1.345$. 
\begin{figure}[ht]
\centering
\begin{tabular}{cc}
 \subfigure[]{\includegraphics[width=7cm,height=5cm]{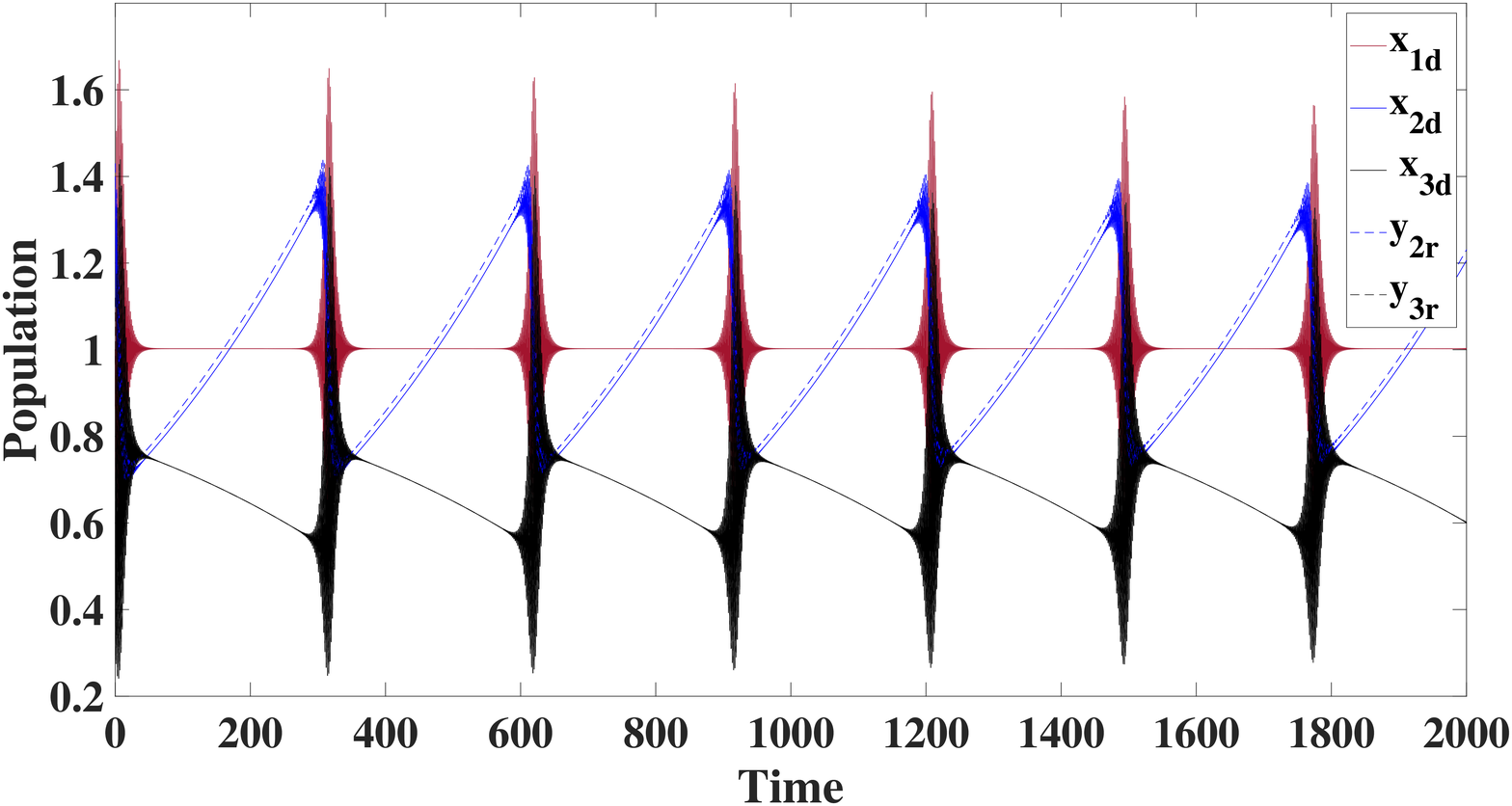}}  &  
  \subfigure[]{\includegraphics[width=7cm,height=5cm]{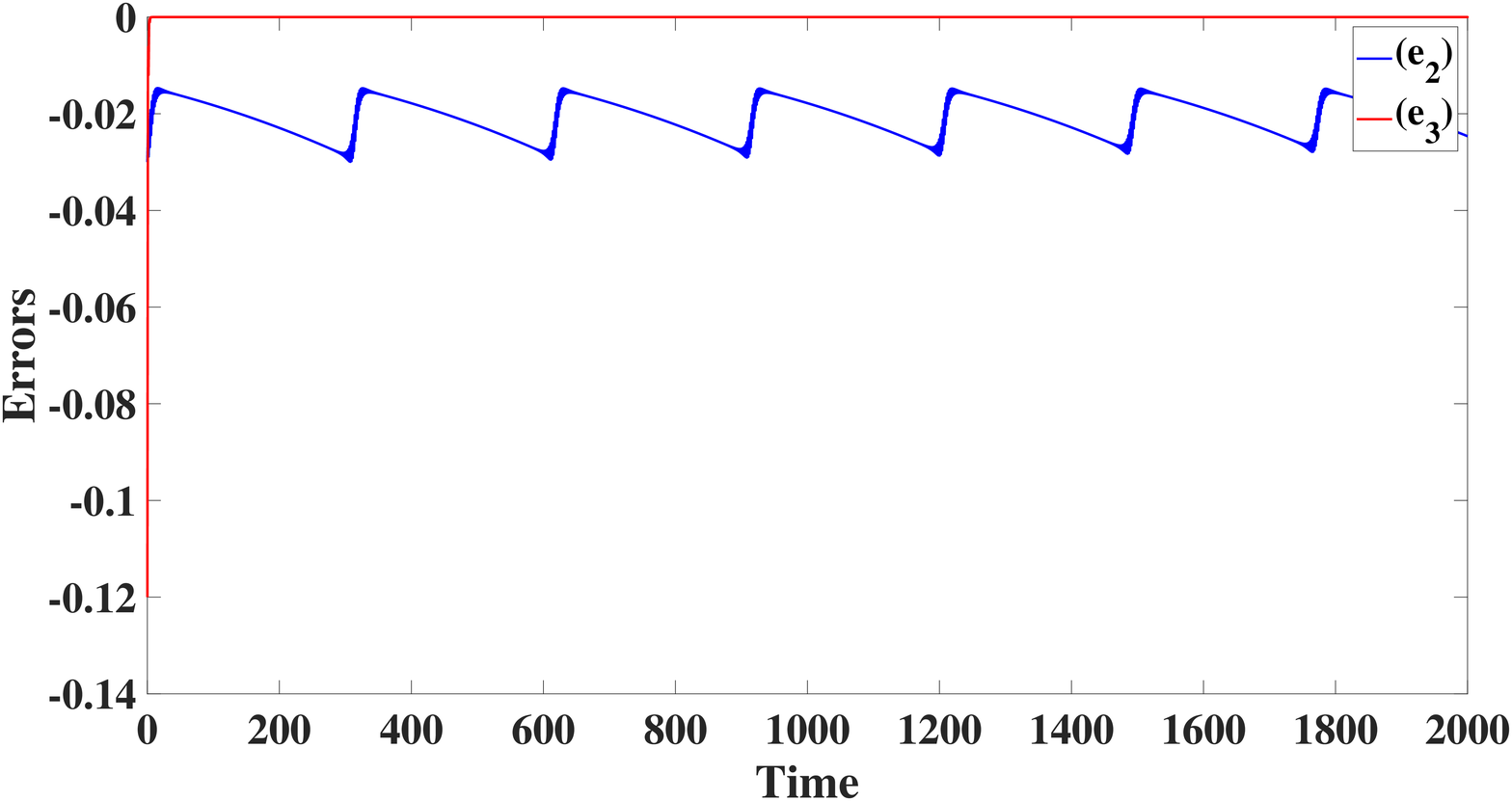}}    \\
  \subfigure[]{\includegraphics[width=7cm,height=5cm]{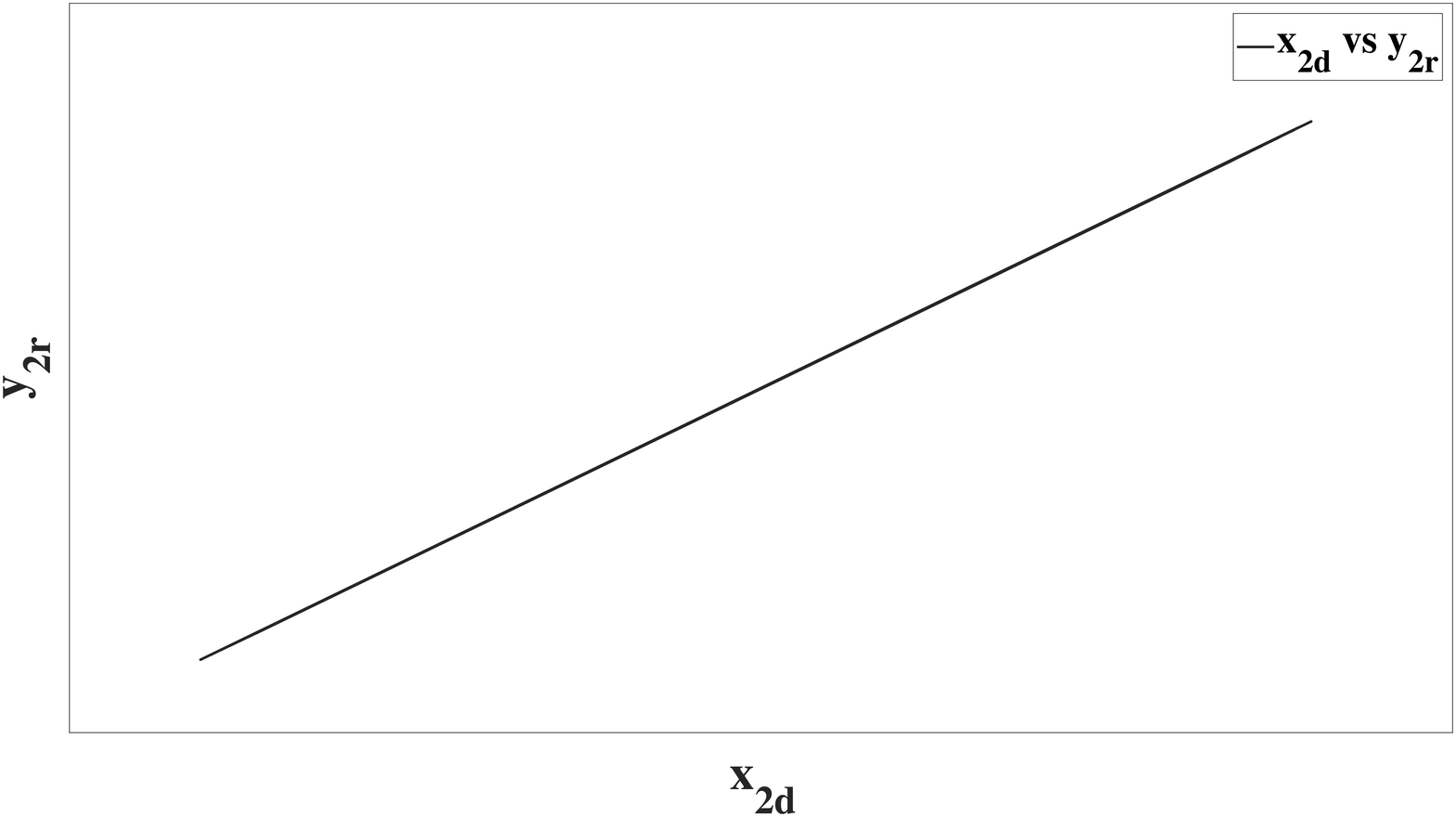}}  &
  \subfigure[]{\includegraphics[width=7cm,height=5cm]{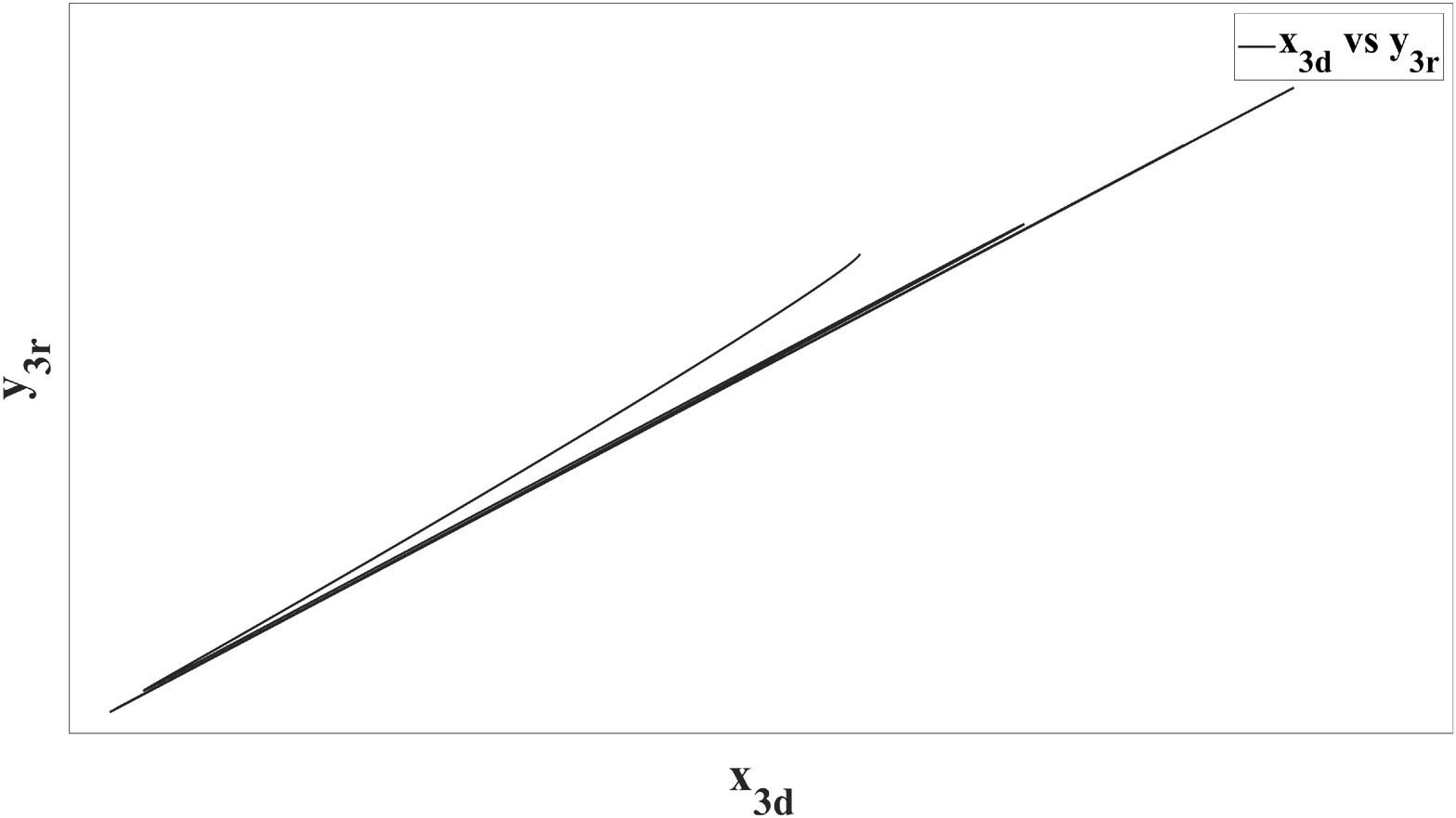}}  
\end{tabular}
\caption{(a):  solutions of drive and response systems plotted over time, (b): errors between drive and response systems  over time,(c): synchronization plot of $\{{x}_{2d},{x}_{2r}\}$, (d): synchronization plot of $\{{x}_{3d},{x}_{3r}\}$ } 
\end{figure} \label{figa}

\subsubsection{Lyapunov Spectrum For Drive-Response System}
Since the necessary condition for the stability of the synchronization manifold is the negative largest transverse Lyapunov exponent. In the case of complete replacement synchronization, the transverse Lyapunov exponents are also known as conditional Lyapunov exponents. It is because Lyapunov exponents for the new system depend on the coupling from the drive{\cite{frisk2016synchronization}}. The Lyapunov spectrum is a global indicator of the system's state, aggregating over the behaviour of the entire system trajectory in phase space. The typical approach for observing these transitions is to see a change in sign of the Lyapunov exponents of the system, as obtained from ensemble averages of the eigenvalues of the Jacobian matrix of system $(50)$ \cite{lahav2018synchronization}. 
We solve equations of system $(50)$ and get the Jacobian matrix as
\begin{equation}
J({x}_{1d},{x}_{2d},{x}_{3d},{x}_{2d},{x}_{3r})= {A}_{5 \times 5}, ~where~ A = [a_{{i}{j}}].
\end{equation}
The entries of the matrix $A$ are given as:
\begin{equation*}
\begin{split}
 {a}_{{1}{1}}={1-{x}_{2d}+2(r-p{x}_{3d}){x}_{1d}},\\  {a}_{{1}{2}}=-{x}_{1d},\quad {a}_{{1}{3}}={-p{x}_{1d}^{2}},\\ 
  {a}_{{2}{1}} ={x}_{2d},\quad {a}_{{2}{2}}={-1+{x}_{1d}},\\
  {a}_{{3}{1}}=2p{x}_{1d}{x}_{3d}, \quad {a}_{{3}{3}}={-q+p {x}_{1d}^{2}},
 \\ 
 {a}_{{4}{1}}={{x}_{2d}},\quad {a}_{{4}{2}}={\mu_{1}}, \quad  {a}_{{4}{4}}={-1+ {x}_{1d}-\mu_{1}{x}_{2d}},\\
 {a}_{{5}{1}}=2p {x}_{3r} {x}_{1d} , \quad {a}_{{5}{3}}=\mu_{2}, \quad {a}_{{5}{5}}=-q+p{x}_{1d}^{2}-\mu_{2},\\
 {a}_{{1}{4}}= \quad a_{{1}{5}}= \quad {a}_{{2}{3}} = \quad {a}_{{2}{4}}= \quad {a}_{{2}{5}}= \quad {a}_{{3}{2}}=0,\\
 {a}_{{3}{4}}= \quad {a}_{{3}{5}}= \quad 
 {a}_{{4}{3}}= \quad {a}_{{4}{5}}= \quad {a}_{{5}{2}}= \quad {a}_{{5}{4}}={0}.
\end{split}
\end{equation*}
Averaging the eigenvalues of Jacobian $J$  over all phase space configurations set-up by the chaotic trajectory, we get the five Lyapunov exponents of the system of two coupled GLV models.
 \begin{figure}[ht]
\begin{center}
\includegraphics[width=10cm,height=6cm]{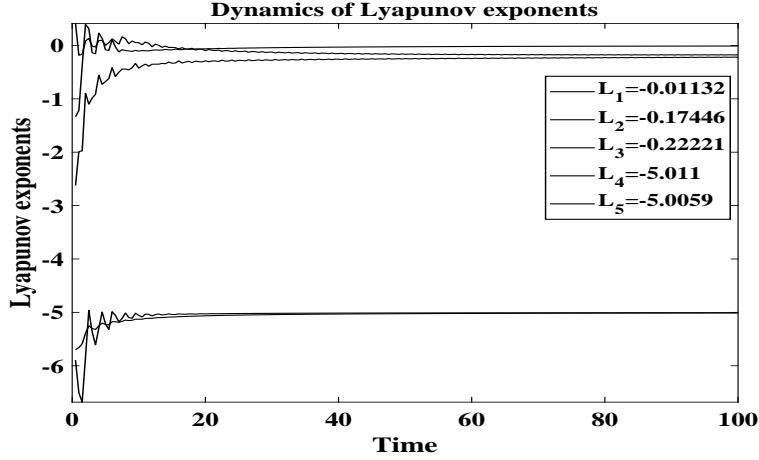}
\caption{Lyapunov Exponents of two GLV models coupled with positive feedback gains $\mu_{1}=0.000024$ and $\mu_{2}= 1.345$.}
\end{center}
\label{fig:8} 
\end{figure}
\\
For set of parameter values $\{p,q,r\} =\{2.9851,3,2\}$,  Lyapunov exponents of the drive-response system are obtained as
\begin{equation}
\begin{array}{ll}
{L}_{1}=-0.011320,\\
{L}_{2}=-0.174464, \\
 {L}_{3}=-0.22221,\\
 {L}_{4}= -5.011,\\
  {L}_{5}=-5.0059.
\end{array}
\end{equation}
Since, all Lyapunov exponents are negative which confirms the stable synchronization manifold, therefore, it can be concluded that the states of coupled GLV systems are synchronized. 
 
 \subsection{\label{sec:level7}Adaptive control law for stability of synchronization manifold}
Using the method \cite{fotsin2005adaptive}, we design non-linear adaptive controller for global complete-replacement synchronization of two chaotic GLV systems with unknown parameters. We consider the drive system as 
 \begin{equation}
\begin{array}{ll}
   \dot{x}_{1d} &= {x}_{1d}(1-{x}_{2d}+r {x}_{1d}-p {x}_{3d}{x}_{1d}),\\
    \dot{x}_{2d} &= {x}_{2d}(-1 +{x}_{1d}), \\ 
    \dot{x}_{3d} &= {x}_{3d}(-{q}+p {x}_{1d}^2).
  \end{array} 
 \end{equation} 
 The response system is  given by controlled  chaotic system
 \begin{equation}
\begin{array}{ll}
    \dot{x}_{2r} &= {x}_{2r}(-1 +{x}_{1d}) + {u}_{1}, \\ 
    \dot{x}_{3r} &= {x}_{3r}(-{q}+{p} {x}_{1d}^2)+{u}_{2}. 
\end{array} 
\end{equation} 
 The synchronization error between drive and response systems is defined as
\begin{equation}
\begin{array}{ll}
    {e}_{2}(t)& ={x}_{2r}(t)-{x}_{2d}(t) \\ 
   {e}_{3}(t) &= {x}_{3r}(t)-{x}_{3d}(t). 
\end{array} 
\end{equation} 
 The error dynamics  between drive and response systems is calculated as :
\begin{equation}
\begin{array}{ll}
    \dot{{e}_{2}} &= -{e}_{2} + {x}_{1d}{e}_{2} + {u}_{1}, \\ 
    \dot{{e}_{3}} &= -{q} {e}_{3}+{p} {x}_{1d}^{2} {e}_{3} + {u}_{2}.\\ 
  \end{array} 
\end{equation} 
In $(64)$, unknown parameters ${p}$ and  ${q}$  are to be determined by using parameter estimates ${P(t)}$ and ${Q(t)}$ respectively. For this purpose, we consider  adaptive control laws ${u}_{1}$ and ${u}_{2}$ with positive feedback gains  ${\mu}_{1}$ and ${\mu}_{2}$ as
\begin{equation}  
\begin{array}{ll}
   {u}_{1} &={e}_{2} - {x}_{1d}e_{2} - {\mu}_{1}e_{2}, \\ 
   {u}_{2} &= Q(t) {e}_{3}- P(t) {x}_{1d}^{2} {e}_{3} - {\mu}_{2}{e}_{3}.
  \end{array} 
\end{equation} 
Using control law $(65)$ into the error dynamics $(64)$, we get
\begin{equation}  
\begin{array}{ll}
    \dot{{e}_{2}} &= -{\mu}_{1}{e}_{2}, \\ 
    \dot{{e}_{3}} &= -({q}-Q(t)) {e}_{3}+({p}-P(t)) {x}_{1d}^{2} {e}_{3} -{\mu}_{2}{e}_{3}.\\ 
  \end{array} 
\end{equation}   
We define parameter estimation error as
 \begin{equation} 
 \begin{array}{ll}
  {e}_{p} (t) = (p-P(t)), \\
  {e}_{q} (t) = (q-Q(t)).
\end{array}  
 \end{equation} 
 Using $(67)$, we can simplify the error dynamics $(66)$ as
  \begin{equation}  
\begin{array}{ll}
    \dot{{e}_{2}}= -{\mu}_{1}{e}_{2}, \\ 
    \dot{{e}_{3}}= - {e}_{q} (t) {e}_{3}+{e}_{p} (t) {x}_{1d}^{2} {e}_{3}- \mu_{2}{e}_{3}.\\ 
  \end{array} 
 \end{equation} 
Differentiating $(67)$ with respect to ${t}$, we get  
\begin{equation}  
\begin{array}{ll}
    \dot{{e}_{p}} &=-\dot{P}(t) , \\ 
    \dot{{e}_{q}} &= -\dot{Q}(t)
  \end{array} 
\end{equation} 
 
 \begin{theorem}  The identical synchronization manifold $\Omega = [{x}_{2d} = {x}_{2r},~{x}_{3d} = {x}_{3r} ]$ is globally asymptotically stable for the coupling between derive and response system in equation $(50)$ for positive  ${\mu}_{1}$ and ${\mu}_{2}$.
 \end{theorem}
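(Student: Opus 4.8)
The plan is to treat the full error state as the five mismatches $(e_2,e_3,e_p,e_q)$ and to build a single quadratic Lyapunov function on all of them. The reason for enlarging the state is that the error dynamics $(68)$ feed the unknown-parameter mismatches $e_p$ and $e_q$ \emph{multiplicatively} into $\dot e_3$; a Lyapunov candidate built from $e_2$ and $e_3$ alone would therefore leave sign-indefinite cross terms that cannot be signed by the feedback gains. Including $e_p^2$ and $e_q^2$ is precisely what lets the still-unspecified adaptation laws $\dot P(t)$ and $\dot Q(t)$ be chosen to cancel those terms, so specifying these update laws is itself part of the proof.

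Concretely, I would start from
\[
L(e_2,e_3,e_p,e_q)=\tfrac12\bigl(e_2^2+e_3^2+e_p^2+e_q^2\bigr),
\]
which is positive definite and radially unbounded. Differentiating along $(68)$ and $(69)$ and substituting $\dot e_p=-\dot P$, $\dot e_q=-\dot Q$ yields
\[
\dot L=-\mu_1 e_2^2-\mu_2 e_3^2+e_p\bigl(x_{1d}^2 e_3^2-\dot P\bigr)+e_q\bigl(-e_3^2-\dot Q\bigr).
\]
The next step is to choose the parameter update laws so that the two bracketed indefinite terms vanish identically, namely $\dot P(t)=x_{1d}^2 e_3^2$ and $\dot Q(t)=-e_3^2$. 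With this choice $\dot L=-\mu_1 e_2^2-\mu_2 e_3^2\le 0$ for every pair of positive gains $\mu_1,\mu_2$, so no further restriction on $\mu_1,\mu_2$ is needed beyond positivity, matching the statement.

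The main obstacle is that this $\dot L$ is only negative \emph{semi}definite: it dominates $e_2$ and $e_3$ but carries no information about $e_p,e_q$, so the classical Lyapunov asymptotic-stability theorem does not apply to the full state, and indeed the parameter errors need not converge. To close the argument I would invoke Barbalat's lemma (equivalently LaSalle's invariance principle). Since $L\ge 0$ is nonincreasing it converges to some limit, whence $\int_0^\infty(\mu_1 e_2^2+\mu_2 e_3^2)\,dt=L(0)-L(\infty)<\infty$; boundedness of $L$ makes all of $e_2,e_3,e_p,e_q$ bounded, and because the drive trajectory $x_{1d}$ remains bounded on the (dissipative) chaotic attractor, $\dot e_2$ and $\dot e_3$ are bounded, so $t\mapsto \mu_1 e_2^2+\mu_2 e_3^2$ has bounded derivative and is uniformly continuous. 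Barbalat's lemma then forces $\mu_1 e_2^2+\mu_2 e_3^2\to 0$, hence $e_2,e_3\to 0$ as $t\to\infty$, which is exactly the global asymptotic stability of the synchronization manifold $\Omega$.
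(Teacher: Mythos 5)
Your proof is correct and follows essentially the same route as the paper's: the same enlarged Lyapunov function $L=\tfrac12\left(e_2^2+e_3^2+e_p^2+e_q^2\right)$, the same strategy of choosing the parameter update laws to cancel the sign-indefinite cross terms so that $\dot L=-\mu_1e_2^2-\mu_2e_3^2$, and the same integration-plus-Barbalat argument to pass from a merely negative semidefinite $\dot L$ to $e_2,e_3\to 0$. One substantive discrepancy deserves attention: your update law $\dot P=x_{1d}^2e_3^2$ differs from the paper's $\dot P(t)=x_{1d}^2e_3$ in equation $(74)$, and yours is the correct one. Since $\dot e_3$ contains the term $e_p x_{1d}^2 e_3$ and gets multiplied by $e_3$ in $\dot L$, the cross term is $e_p\left(-\dot P+x_{1d}^2e_3^2\right)$; the paper's equation $(72)$ drops a factor of $e_3$ here, and its update law inherits that slip, so that with $\dot P=x_{1d}^2e_3$ as literally stated the residual term $e_p\,x_{1d}^2e_3(e_3-1)$ survives and $\dot L$ is no longer sign definite. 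Your writeup also tightens two points the paper glosses over: you justify the uniform continuity needed for Barbalat via boundedness of $x_{1d}$ on the chaotic attractor, and you claim only asymptotic convergence of $e_2,e_3$ (with no claim on $e_p,e_q$), whereas the paper asserts exponential convergence, which Barbalat's lemma does not provide.
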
 
 \begin{proof} The identical synchronization manifold for the systems equation $(24)$ can be written as 
  $$\Omega = [{x}_{2d} = {x}_{2r},{x}_{3d} = {x}_{3r} ].$$
   Using change of coordinates
 \begin{equation}
 \textbf{e}=\begin{bmatrix}  {e}_{2} \\ {e}_{3}    \\ \end{bmatrix} = \begin{bmatrix}   {x}_{2d} - {x}_{2d}\\ {x}_{3r} - {x}_{3d}    \\ \end{bmatrix}, 
  \end{equation}
 such that $\Omega$ can be written  
 $$\Omega = (0,0).$$
 Next, we use Lyapunov stability theory for finding an update law for the parameter estimates. we consider the quadratic Lyapunov function  as 
 \begin{equation}  
 L = \frac{1}{2}({e}_{2}^{2} + {e}_{3}^{2} + {e}_{p}^{2} + {e}_{q}^{2}) 
 \end{equation} 
 Note that Lyapunov function ${L}$ is positive definite on ${R}^{4}$. Differentiating ${L}$ along the trajectories of $(66)$ and $(69)$. We get,
 \begin{equation}
 \begin{array}{ll}
\frac{d{L}}{d{t}} &= {{e}_{2}}{\dot{{e}_{2}}}+ {{e}_{3}}{\dot{{e}_{3}}}+{{e}_{p}}{\dot{{e}_{p}}}+{{e}_{q}}{\dot{{e}_{q}}},\\
 \frac{d{L}}{d{t}} &=-{\mu}_{1}e_{2}^{2}-{\mu}_{2}e_{3}^{2}+ {e}_{p}(t)(-\dot{P}(t)+{x}_{1d}^{2} {e}_{3})+{e_{q}(t)}(-\dot{Q}(t)-{e}_{3}^{2}).
 \end{array}   
 \end{equation}
 We want error system to be asymptotically stable i.e.
  \begin{equation}  
  \frac{d{L}}{d{t}} < 0.
  \end{equation}
In view of $(72)$, we take the parameter update law as 
\begin{equation} 
\begin{array}{ll}
    \dot{P}(t) = {x}_{1d}^{2} {e}_{3}, \\ 
   \dot{Q}(t) = -{e}_{3}^{2}. \\ 
  \end{array} 
\end{equation}
  By substituting the parameter update law $(74)$ into Lyapunov function, we obtain time derivative of ${L}$ as 
   \begin{equation}
       \frac{d{L}}{d{t}} = -{\mu}_{1}{e}_{2}^{2}- {\mu}_{2}{e}_{3}^{2},
       \end{equation}
From $(75)$, it is clear that $\dot{L}$ is negative semi-definite  function on ${R}^{4}$. Thus, we can conclude that the synchronization error vector $\textbf{\textit{e(t)}}$ and the parameter estimation error are globally bounded, i.e.
   \begin{equation}
   [{e}_{2},e_{3},e_{p},e_{q}]^{T} \in \textbf{L}_{\infty}.
   \end{equation}
  We define $ \mu = min \{\mu_{1},\mu_{2}\}$, then it follows from $(75)$ that 
   \begin{equation}
  \frac{d{L}}{d{t}} \leq -\mu  ||\textbf{e}||^{2}.
   \end{equation}
  Integrating the inequality $(77)$ with respect to $\tau$ from $0$ to $t$. We get,
   \begin{equation}
   \int_{0}^{t} \mu  ||\textbf{e}(\tau)||^{2} d\tau \leq L(0)-L(t). 
    \end{equation}
  From $(78)$ it follows that $\textbf{e} \in \textbf{L}_{2}$ and hence, $\dot{\textbf{e}}(t)\in \textbf{L}_{\infty} $.
\\
 With the help of Barbalat's lemma \cite{min2007barbalat},\cite{sun2009barbalat}, we conclude that $\textbf{e}(t) \rightarrow {0}$ exponentially as $t \rightarrow \infty$ for all initial conditions $\textbf{e}(0) \in R^{2}$.
  \end{proof}  
 
   \subsubsection{Numerical Simulation}
For numerical simulations, we use the classical fourth-order Runge-Kutta method to solve the GLV system.  The initial value of parameter estimates are taken as $P(0)=3.9,~ Q(0)=4$. The initial values of states of drive and response systems are taken as $({x}_{1d}(0),{x}_{2d}(0),{x}_{3d}(0))=(4, 1.4, 1.41)$ and $({x}_{2r}(0)$,  ${x}_{3r}(0))=(1,1.414)$ respectively. The effectiveness of control law is verified through simulation results which are shown in figure \ref{adap}. Figure  shows the solutions of system $(50)$.  It is clear that although the initial values are different, the error dynamics approach to zero as time goes to $\infty$. Therefore, our numerical results confirm that the amplitude and frequency of state variables of response system become same with the drive system under the designed control law. 
   \begin{figure}[ht]
\centering
\begin{tabular}{cc}
 \subfigure[]{\includegraphics[width=7cm,height=5cm]{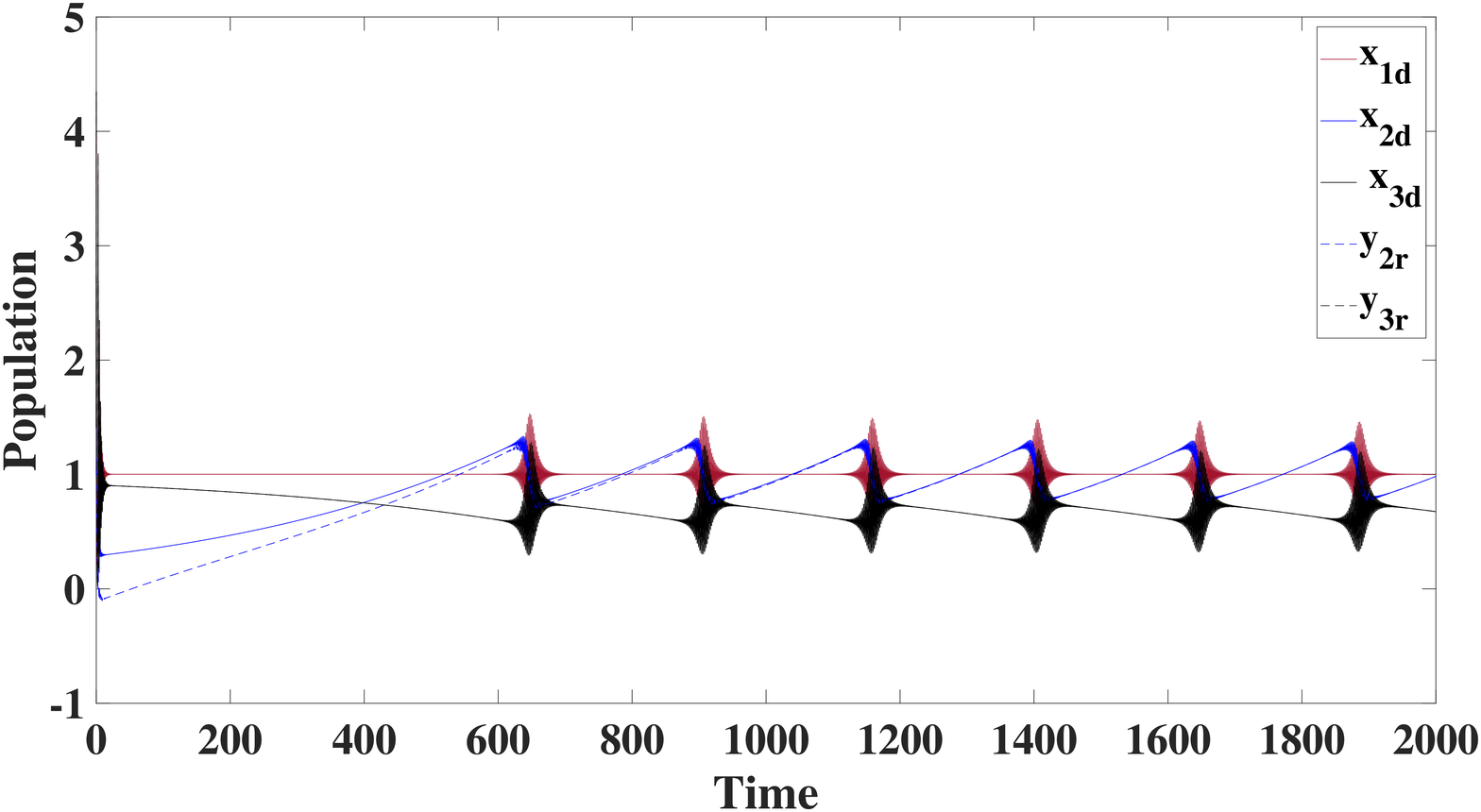}}  &  
  \subfigure[]{\includegraphics[width=7cm,height=5cm]{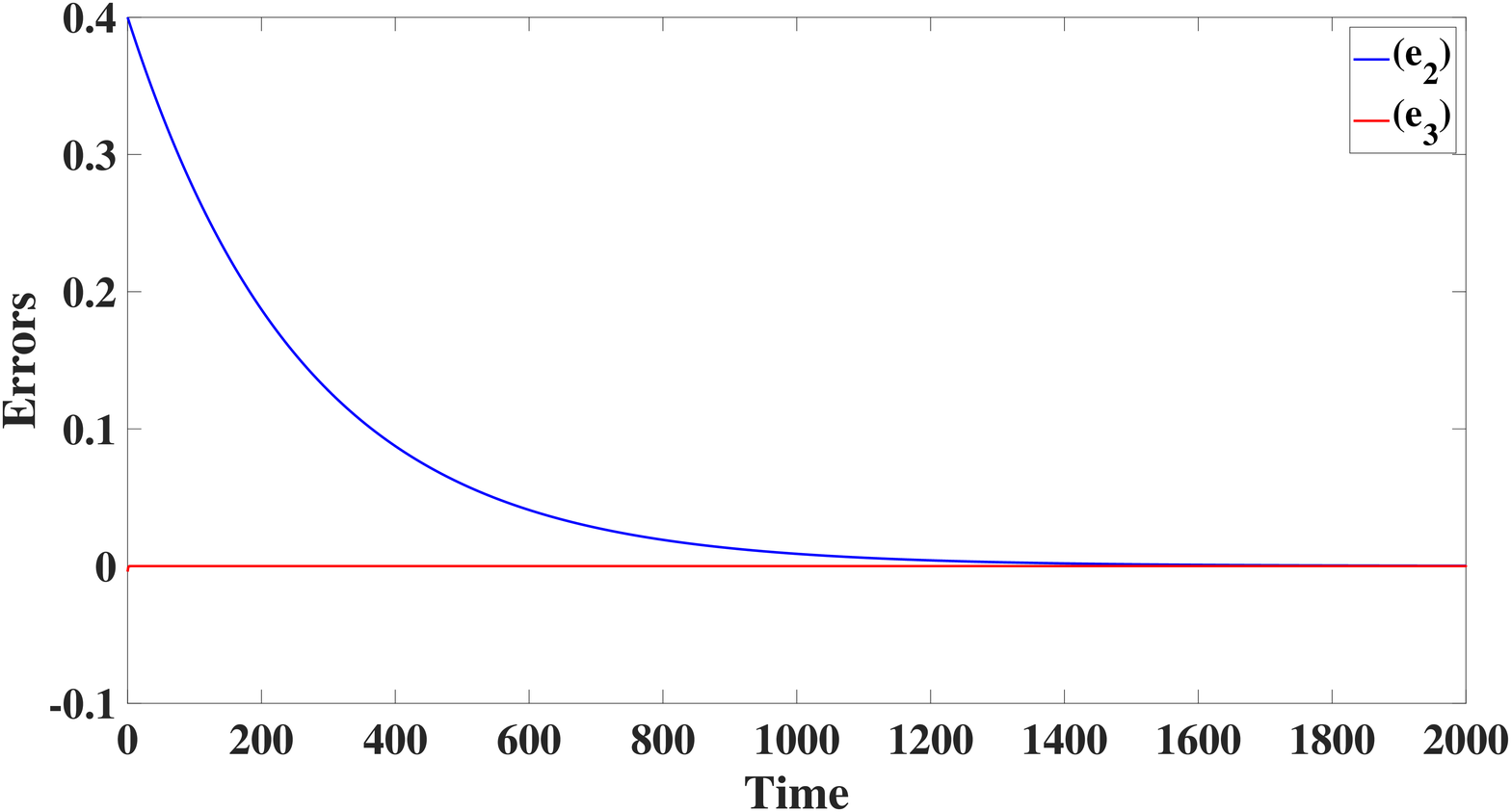}}  \\ 
  \subfigure[]{\includegraphics[width=7cm,height=5cm]{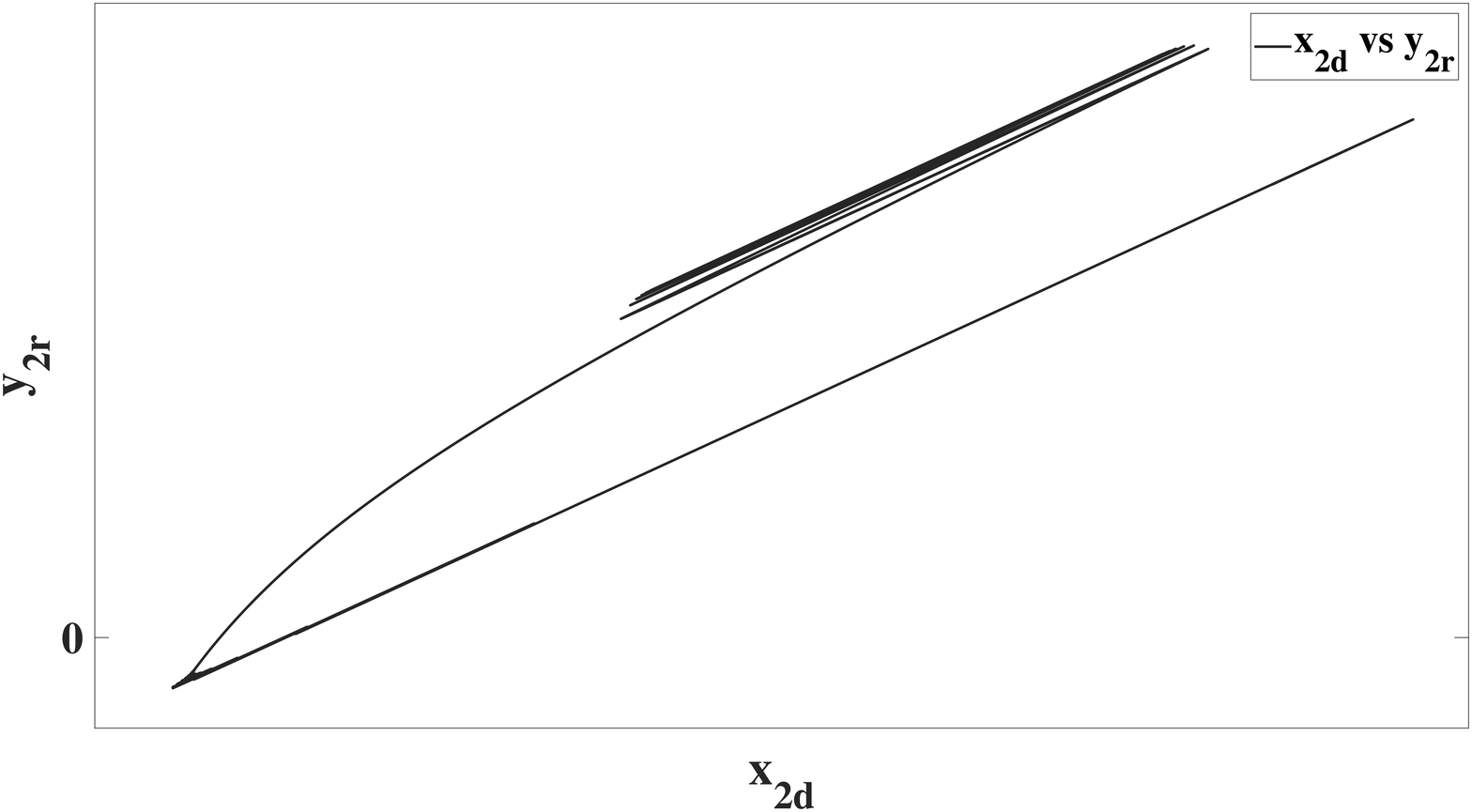}}  & 
  \subfigure[]{\includegraphics[width=7cm,height=5cm]{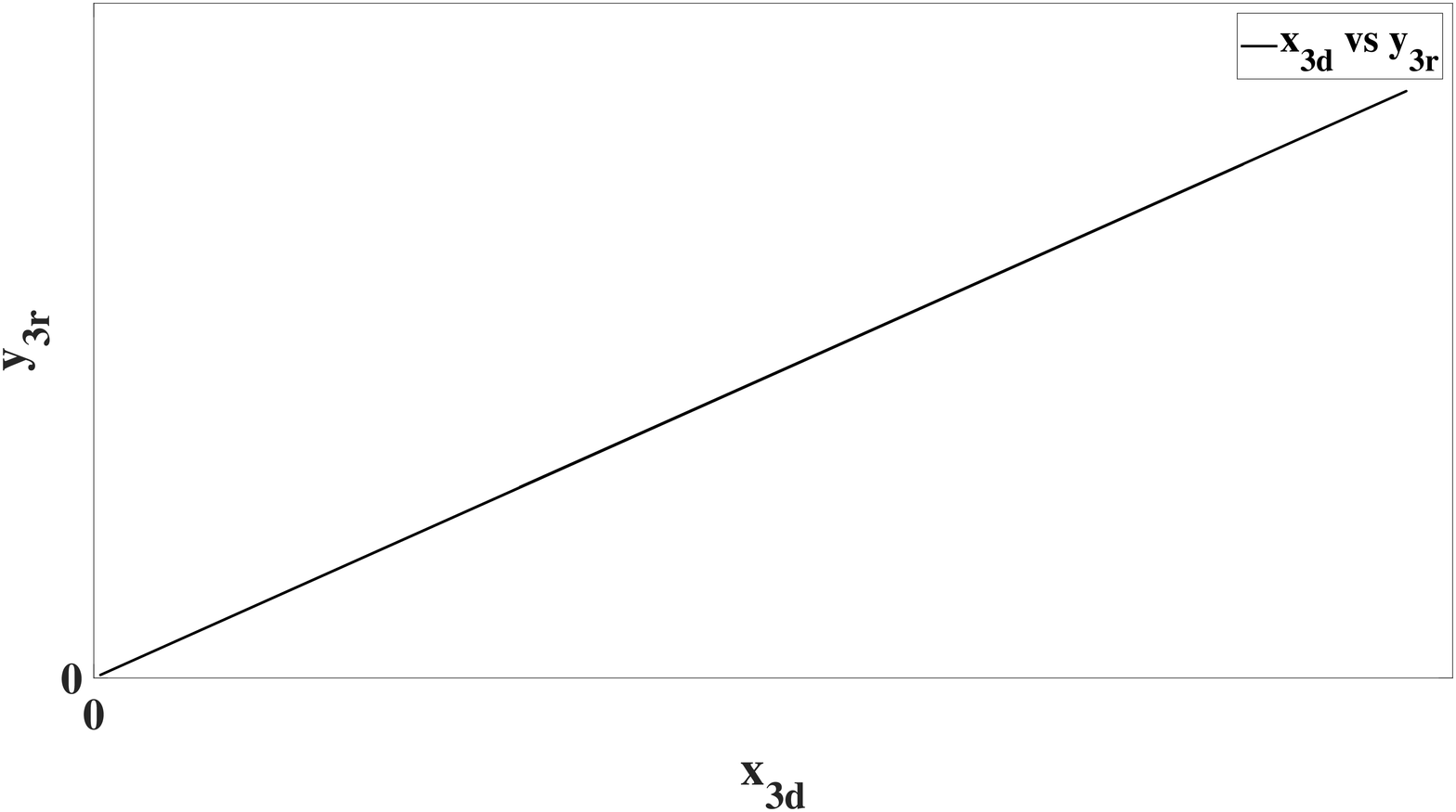}}
\end{tabular}
\caption{(a): solutions of drive and response systems over time, (b): errors between drive and response systems over time, (c): synchronization plot for $\{{x}_{2d}, {x}_{2r}\}$, (d): synchronization plot for $\{{x}_{3d},{x}_{3r}\}$ for coupling strengths  $\mu_{1}=0.0038$ and ${\mu}_{2}=2$.} 

\end{figure} \label{adap}
\section{\label{sec5}Conclusion}
This work examines predator-prey systems in ecosystems to understand how they contribute to sustainable environment. Our focus is on the use of Generalized Lotka-Volterra (GLV) equations to model the competition and trophic relationships between various species. We consider three different forms of three-dimensional GLV models, each with different functional responses (linear, Holling type II, and Holling type III). We find that the model with the linear functional response exhibits unstable dynamics, where alteration in functional response can stabilize the system dynamics for a particular scenario. To stabilize the dynamics in patchy ecosystem, we focus on the GLV model with the linear functional response for the remainder of the study. We investigate its fundamental properties and also examine the stability of equilibrium points and the suppression of instability at equilibrium. Through computation of Lyapunov exponent, we find that the model is chaotic due to one positive Lyapunov exponent and has two unstable equilibrium points for the constant parameters $p=2.9851, q=3, r=2$.
\\
Further, we investigate the synchronization of two chaotic GLV models using two control schemes: the Active Control Technique and the Adaptive Control Technique. We consider a configuration in which the prey population in the drive system acts as a driving variable for the response system, allowing the other two predator populations to depend only on the prey population. Using the Active Control Technique, we apply two simple linear controllers to synchronize the states of the GLV systems. These controllers are easy to implement and more straightforward than previous results. The stability of synchronization manifold is ensured through the transition of positive conditional Lyapunov exponent to negative one.  We also examine the synchronization of two chaotic GLV systems with unknown parameters using the Adaptive Control Technique. We design two adaptive laws of parameters using the Lyapunov stability theory to ensure global and exponential synchronization of the systems. Our results show that both the Active and Adaptive Control Techniques are effective for achieving global synchronization in chaotic systems.  
\section*{Funding}
The second author's research was funded by the Science and Engineering Research Board (SERB), under two separate grants with grant numbers MTR/2018/000727 and EMR/2017/005203.

\section*{Disclosure statement}
The authors declare that they have no conflict of interest.

\bibliographystyle{tfnlm}
\bibliography{aipsamp}

\end{document}